\newtheorem{conj}{\sc Conjecture}
\newtheorem{thm}[conj]{\sc Theorem}
\newtheorem{cor}[conj]{\sc Corollary}
\newtheorem{prop}[conj]{\sc Proposition}
\newtheorem{lemma}[conj]{\sc Lemma}
\def\to{\rightarrow}
\def\EE{ {\rm I} \kern-.15em {\rm E} }
\def\PP{ {\rm I} \kern-.15em {\rm P} }
\def\wh{\widehat}
\def\eps{\varepsilon}
\def\mds{\medskip}
\def\F{\mbox{$\mathcal F$}}
\def\G{\mbox{$\mathcal G$}}
\def\B{\mbox{$\mathcal B$}}
\def\C{\mbox{$\mathcal C$}}
\def\R{\mbox{$\mathcal R$}}
\def\T{\mbox{$\mathcal T$}}
\def\H{\mbox{$\rm H$}}
\def\MM{\mathbb M}
\def\TT{\mathbb T}
\def\FF{\mathbb F}
\def\CC{\mathbb C}
\def\ZZ{\mathbb Z}
\def\HH{\mathbb H}
\def\RR{\mathbb R}
\def\EE{\mathbb{E}}
\def\PP{\mathbb{P}}
\def\YY{\mathbb{Y}}
\def\i{\mathbf{i}}
\def\u{\mathbf{u}}
\def\v{\mathbf{v}}
\def\x{\mathbf{x}}
\def\s{\mathbf{s}}
\def\X{\mathbf{X}}
\def\1{\mathbf{1}}
\begin{document}

{\huge  Asymptotic Total Variation Tests for Copulas}

\bigskip

{\sc JEAN-DAVID FERMANIAN, DRAGAN RADULOVI\'C and MARTEN WEGKAMP\thanksref{t2}}

\medskip


\thankstext{t2}{The research of Wegkamp is supported in part by NSF DMS 1007444 and NSF DMS 1310119 grants.}

\runtitle{ATV Tests for Copulas}
\runauthor{J.-D. Fermanian, D. Radulovi\'c and M. Wegkamp}


{\it
Jean-David Fermanian\\
Crest-Ensae\\ 3 av. Pierre Larousse\\ 92245 Malakoff cedex, France\\ E-mail: jean-david.fermanian@ensae.fr

\medskip

Dragan Radulovi\'c\\
Department of Mathematics\\ Florida Atlantic University, USA\\ E-mail: radulovi@fau.edu

\medskip

Marten Wegkamp\\ Department of Mathematics \& Department of Statistical Science\\ Cornell University, USA\\ E-mail: marten.wegkamp@cornell.edu
}

\medskip

\noindent

We propose a new platform of  goodness-of-fit tests  for copulas, based on
empirical copula processes and  nonparametric bootstrap
counterparts. The standard Kolmogorov-Smirnov type test for copulas
that takes the supremum of the empirical copula process indexed by
orthants is extended by test statistics based on  the
empirical copula process indexed by families of $L_n$ disjoint
boxes, with $L_n$ slowly tending to infinity. Although the
underlying empirical process does not converge, the critical values of our
new test statistics can be consistently estimated by nonparametric
bootstrap techniques, under simple or composite null assumptions.
We implemented a particular example of these tests and our simulations  confirm that the power of the new procedure is oftentimes
higher than the power of the standard Kolmogorov-Smirnov or the
Cram\'er-von Mises tests for copulas.

\medskip

{\it AMS 2000 subject classifications:} Primary 60F17, 60K35; secondary 60G99.

{\it Keywords:} Bootstrap, copula, empirical copula process, goodness-of-fit Test, weak Convergence.

\maketitle

\section{Introduction}
\setcounter{equation}{0}

This paper introduces  new powerful goodness-of-fit (GOF) tests for
copulas in $[0,1]^d$, $d\geq 2$, based on the empirical copula process
 \begin{eqnarray}
 \label{Z_n}
\ZZ_n(\u) = \sqrt{n}(\CC_n -C)(\u), \qquad  \u=(u_1,\ldots,u_d)
\in [0,1]^d,\end{eqnarray} given a sample of $n$ independent random
vectors $\X_i=(X_{i1},\ldots,X_{id})\in \RR^d$, $i=1,\ldots,n$,  from a common distribution function $H$. Let $C$ be the
 associated copula function, as given by Sklar's Theorem (Sklar, 1959).
 Here $\CC_n$ is the
usual empirical copula, as introduced by Deheuvels (1979): denoting
by $\HH_n$ the joint cdf of the sample $(\X_1,\ldots,\X_n)$,
$\FF_{n,j}$ the $j$-th empirical cdf associated to
$(X_{1j},\ldots,X_{nj})$, $j=1,\ldots,d$, and $\FF_{n,j}^{-}$ its
empirical quantile function, we have
$$\CC_n(\u)= \HH_n(\FF_{n,1}^{-}(u_1),\ldots,\FF_{n,d}^{-}(u_d))$$ by definition, for every $\u=(u_1,\ldots,u_d)\in [0,1]^d$.
The Kolmogorov-Smirnov (KS) test statistic for testing of the null
hypothesis  $\H_0:\, C=C_0$ is
\begin{eqnarray}
\label{KS}
{\rm KS}_n = \sup _{\u \in [0,1]^d} | \sqrt{n}(\CC_n -C_0)(\u) |.
\end{eqnarray}
The Cram\'er-von Mises statistic (CvM) is
\begin{eqnarray}
\label{CvM}
{\rm  CM}_n = \int  \{  \sqrt{n}(\CC_n -C_0)(\u) \}^2 \, {\rm d}\CC_n(\u).
\end{eqnarray}

\medskip



It is well-known, see, for instance, Fermanian et al. (2004),
that $\ZZ_n$ and its bootstrap counterpart $\ZZ_n^*$, defined in
(\ref{Z_n^*}) below, both converge weakly to the same tight
Gaussian process in $\ell^\infty([0,1]^d)$ under the null
hypothesis. Therefore, we can compute the $\alpha$-upper points of
${\rm KS}_n$ and ${\rm CM}_n$ via the bootstrap.
To the best of our knowledge, all the
proposed GOF tests rely on simulation-based procedures to
calculate their corresponding p-values, with the notable
exception of the distribution-free test statistics of Fermanian (2005). The latter idea has been further developed  by Scaillet (2007) and Fermanian and Wegkamp (2012). A parametric bootstrap has been proposed (Genest and
R\'emillard, 2008) to tackle composite null hypotheses, while
R\'emillard and Scaillet (2009) advocate the use of the
multiplier central limit theorem to build an alternative bootstrap
empirical copula process.  B\"ucher and Dette (2010) give a survey and a comparison of various bootstrap methods.\\

The goal of this paper is to develop   more powerful tests than 
 the KS test (\ref{KS}) and CvM test (\ref{CvM}) for simple and composite null hypotheses.  The next section offers a class of such tests. For instance,  
 in the case of a null simple hypothesis $\H_0:\, C=C_0$, we propose the  test that rejects
$\H_0$ for large values of the test statistic
\begin{equation}
\label{T_n:B}
\mathbb{T}_{n}:=\sup_{B_{1},\ldots
,B_{L_n}}\sum_{k=1}^{L_n}|\mathbb{Z}_{n}(B_{k})|.
\end{equation}
The supremum is taken over all {disjoint}
 boxes
$B_{1},\ldots ,B_{L_n}\subset \lbrack 0,1]^{d}$ of the form $\prod_{j=1}^d (a_j,b_j]$, using the convention
\begin{equation}
\mathbb{Z}_{n}((a_1,b_1]\times\cdots\times (a_d,b_d])=\Delta_{a_1,b_1}^1 \Delta_{a_2,b_2}^2 \cdots \Delta_{a_d,b_d}^d \ZZ_n(\u),
\label{Z_n:box}
\end{equation}for any arbitrary point $\u \in [0,1]^d$ and
for all $0\leq a_j< b_j\leq 1$, $j=1,\ldots,d$. Here, we have used the usual operators $\Delta^j$ defined for every function $f$ by
$$\left(\Delta^j_{a,b}f\right)(\u)=f(u_1,\ldots,u_{j-1},b,u_{j+1},\ldots,u_d) - f(u_1,\ldots,u_{j-1},a,u_{j+1},\ldots,u_d),  $$
for all $\u\in[0,1]^d$, and all real numbers $a$ and $b$.\\

We will also consider the related  statistics  
\begin{equation}
\widetilde{\TT}_{n}=\max_{B_{1},...,B_{L_{n}}}\sum_{i=1}^{L_{n}}|%
\ZZ_{n}(B_{i})|\text{ ,\ }
\label{tildeTT}
\end{equation}
with the maximum  taken over all disjoint rectangles $%
B_{1},...,B_{L_{n}}$ of the form $B=\Pi _{j=1}^{d}(a_{j},b_{j}]$ with $%
a_{j},b_{j}$ belonging to a grid $\{ {n^{-1/d}}, {2}{n^{-1/d}},\ldots, {\lfloor n^{1/d} \rfloor}{n^{-1/d}}\}.$
Asymptotically, $\widetilde{\TT}_{n}$ and $\TT_{n}$
are the same (since $|\widetilde{\TT}_{n}-\TT_{n}|=o_{p}(1)$ by Lemma 9 and Proposition 10 in Section 5), but $\widetilde{\TT}_{n}$ is computationally much more
tractable. \\

Now, if   $L_n=L$ for all $n$,   the collection of boxes is
sufficiently small that we can still appeal to the weak convergence
of $\ZZ_n$ and $\ZZ_n^*$  in conjunction with the continuous mapping
theorem, to obtain $\alpha$-upper points of the test statistic
$\TT_n$ via the bootstrap. Taking $L_n=+\infty$ for all $n$, or
equivalently, if we consider  all families of disjoint boxes in
$[0,1]^d$ (possibly partitions), the statistic $\TT_n$ is equal to
the total variation distance $TV(\ZZ_n)$  of $\ZZ_n$.
 The resulting test is not statistically  meaningful as $TV(\ZZ_n)$ is maximal, to wit, $TV(\ZZ_n)=n^{1/2} \to +\infty$.
The problem is to find a rich collection that quickly detects
departure from the null, but still yields a consistent test. The
main novelty of our approach is the fact that we let  $L_n$, the
number of boxes,  slowly tend to $ \infty $  in that $L_n \sim (\log
n)^\gamma$, $0<\gamma<1$. While in this case the process  $\ZZ_n$
no longer converges,  Theorem \ref{Thm1} in Section 2 states that we
can still consistently estimate the distribution of the process
$\ZZ_n$ by the bootstrap.
We refer to our procedure as the  Asymptotic  Total Variation (ATV) test. The considered families of boxes are finer and finer, presumably improving the power of the test, while for each $n$ large enough, we still have a consistent test in that we control the type 1 error. A key observation is that under the null hypothesis ${\rm H}_0:\,C=C_0$, we have   $\TT_n\le L_n \sup_{B} |\ZZ_n(B)| =O_p(L_n)$, while under the alternative ${\rm H}_A:\,C=C_1$ for some fixed $C_1\ne C_0$, $\TT_n$ is much larger since the bias is  at least of order $O({n^{1/2}})$. \\

Theorem  \ref{Thm1} extends the surprising result  obtained by
Radulovi\'c (2012) for empirical processes indexed by sums of
indicator functions of  VC-graph classes (see
Theorem~\ref{dragan2012} in the appendix). We require very mild
conditions on the copula function $C$. This is one of the few
notable exceptions known to us in the literature where the bootstrap
``works'', that is, the conditional bootstrap distribution
consistently estimates the distribution of the test statistic, while
the distribution of the statistic  itself does not converge. For
other instances of this phenomenon, we refer to Bickel and Freedman
(1983) and, more recently, Radulovi\'c (1998, 2012, 2013).

Section 3 considers the more general hypothesis that the underlying copula $C$  belongs to some parametric copula family $\{C_\theta,\, \theta\in\Theta\subset \RR^p\}$.
Given a sufficiently regular estimator $\wh\theta$ and its bootstrap counterpart $\wh\theta^*$,
we adjust our statistic (\ref{T_n:B}) and its {non-parametric} bootstrap counterpart to obtain a consistent level $\alpha$ test (Theorem \ref{CoreThParam}).
Again, the result is established under very mild regularity  conditions on the copula $C_\theta$ and the estimators $\wh\theta$ and $\wh\theta^*$. Incidentally, we introduce a new bootstrap procedure under composite null hypotheses, an alternative to the usual parametric bootstrap or the multiplier CLT.

Section 4 then reports a small numerical study where we show that,
in complex but realistic situations, our test (\ref{T_n:B}) is
superior to the Kolmogorov-Smirnov  and the Cram\'er-von Mises
tests. We also comment on a possible inadequacy in the way the
copula GOF tests are commonly evaluated. Finally,  the proofs are
collected in Section~\ref{myproofs}. The appendix contains some
technical results from Segers (2012) and Radulovi\'c (2012) and a description of the implementation of the proposed tests.\\

\section{The Asymptotic Total Variation Test}
\label{GKST}
\setcounter{equation}{0}

{\sc Notations.} Let $H$ be the distribution function of the random
vector $\X$ with marginals $F_1,\ldots,F_d$. We will assume
throughout the paper that $H$ is continuous. Let
$(\X_1,\ldots,\X_n)$ be independent copies of $\X$. We denote the
generalized inverse of a distribution function $F$ by $F^{-}$. For instance, $ F_j^{-}( u) =
\inf\{ x\, |\, F_j(x) \ge u \}$.
The empirical counterparts of $H$ and any $F_j$ are, respectively,
\begin{eqnarray*} \HH_n(\x)&=&\frac{1}{n} \sum_{i=1}^n \1\{ \X_i\le \x \},\ \x\in\RR^d \\
 \FF_{n,j}(x) &=& \frac{1}{n} \sum_{i=1}^n \1\{ X_{i,j}\le x\}, \ x\in\RR,\ \; j=1,\ldots,d.
 \end{eqnarray*}
The copula function of $\X$ is
$ C(\u)= H(F_1^{-}(u_1),\ldots,F_d^{-}(u_d))$, $\u=(u_1,\ldots,u_d)\in [0,1]^d$,
and its empirical estimate is
$ \CC_n(\u)= \HH_n(\FF_{n,1}^{-}(u_1),\ldots, \FF_{n,d}^{-}(u_d))$.
The empirical copula process $\mathbb{Z}_{n}(\u)=\sqrt{n}(\mathbb{C}
_{n}-C)(\u)$ is already defined in (\ref{Z_n}).
We define $\mbox{$\mathcal F_n$}$ as the class of functions
\begin{equation}
\label{F:class}
f(\x)=\sum_{k=1}^{L_n}c_{k}\1\{\x \in B_{k}\},
\end{equation}
with $c_{k}\in \{-1,+1\}$ and disjoint boxes $B_{k}$ of the form
$\prod_{j=1}^d (a_j,b_j]$ in the unit cube $[0,1]^{d}$, for all
$1\leq k\leq L_n$. We let
\[
\mathbb{Z}_{n}(f)=\sum_{k=1}^{L_n}c_{k}\mathbb{Z}_{n}(B_{k}),
\]
and observe that
\[
\mathbb{T}_{n}=\sup_{f\in \mbox{$\mathcal F_n$}}|\mathbb{Z}_{n}(f)|=
\sup_{B_{1},\ldots ,B_{L_n}}\sum_{k=1}^{L_n}|\mathbb{Z}_{n}(B_{k})|,
\]
where the supremum is taken over all disjoint boxes $B_{1},\ldots
,B_{L_n} $ of the unit square $[0,1]^{d}$.

If $L_n=L$ for all $n$, then $\F_n=\F$ and   $\ZZ_n$  converges in $\ell^\infty(\F)$ to a
Gaussian process under regularity conditions on $C$, see, for
instance,  Fermanian et al. (2004) and Segers (2012). As a
consequence of the continuous mapping theorem, $\TT_n$ trivially
converges weakly as well. However, if $L_n\to\infty$, as
$n\to\infty$, this is no longer true as the process $\ZZ_n$ does not
converge weakly. \\

The main point of this paper is to show that, provided
$L_n=(\log n)^ {\gamma} $ for some $0<\gamma<1$,
 the distribution of $\TT_n$  can be estimated by the bootstrap.  The bootstrap counterparts of the above processes are defined as follows.
Let the  bootstrap sample $(\X_1^*,\ldots,\X_n^*)$ be obtained by sampling with replacement from
$\X_1$,$\ldots$, $\X_n$. We write
\begin{eqnarray}
\HH_n^*(\x)=\frac1n \sum_{i=1}^n \1\{ \X_i^*\le \x \},\ \x\in\RR^d,
\end{eqnarray}
 for the empirical cdf based on the bootstrap, with marginals
 \begin{eqnarray}
 \FF_{n,j}^*(x)= \frac1n \sum_{i=1}^n \1 \{ X_{i,j}^*\le x\}, \ x\in\RR,\
  j=1,\ldots,d.
 \end{eqnarray}
We denote its associated empirical copula function by $\CC_n^*$.
The bootstrap empirical copula process is
\begin{eqnarray}
\label{Z_n^*}
 \ZZ_n^*= \sqrt{n}(\CC_n ^*-\CC_n) = \sqrt{n}\left\{ \HH_n^*(\FF_{n,1}^{*-},\ldots, \FF_{n,d}^{*-} ) -
 \HH_n(\FF_{n,1}^{-},\ldots, \FF_{n,d}^{-} )\right\}.
\end{eqnarray}

\medskip

{\sc Assumptions.}
We will assume the following set of assumptions:
\begin{itemize}
\item[(C1)]
For any $j=1,\ldots,d$, for all $\u \in [0,1]^d$ with $0<u_j<1$, the first-order partial derivative $C_j(\u)=\partial C(\u) / \partial u_j$ exists and is of bounded variation (Hildebrandt, 1963, e.g.).
Moreover, it satisfies, for some $r>0$, $\beta \geq 0$ and $K<\infty$,
\[ | C_j(\u) - C_j(\v) | \le K \left( u_j^{-\beta} (1-u_j)^{-\beta} + v_j^{-\beta}(1-v_j)^{-\beta} \right)\sum_{l=1}^d |u_l-v_l|^r,
\]
for all $\u, \v \in [0,1]^d$, $0<u_j,v_j<1$.
As in Segers (2013), we extend the domain of each $C_j$ to the whole $[0,1]^d$ by setting
\[
C_j(\u)  : =
\left\{
\begin{array}{ll}
\limsup_{h\downarrow0} \frac{ C(\u+h{\bf e}_j)}{h}   &   \text { if } \u\in [0,1]^d, u_j =0;\\
\limsup_{h\downarrow0} \frac{ C(\u)- C(\u - h{\bf e}_j)}{h}   &\text{ if }  \u\in [0,1]^d, u_j =1.
\end{array}%
\right.  \]
Here ${\bf e}_j$ is the $j$th coordinate vector in $\RR^d$.
\\

\item[(C2)] The number $L_n$ is of order $(\log n)^\gamma $ for some $0<\gamma<1$.
\\
\end{itemize}

{\sc Remark.} We know that continuity of the  partial derivatives of
$C$ on $(0,1)^d$ is required for weak convergence, see Fermanian et
al. (2004) and Segers (2012). The requirement that the partial
derivatives are of bounded variation is natural since we compute the
supremum of $\ZZ_n$ over increasingly finer families of boxes in
$[0,1]^d$. The process  $ \ZZ_n(\u)$ is asymptotically equivalent to
$\alpha_n(\u)-\sum_{j=1}^d C_j(\u)\alpha_{n,j}(u_j)$ with
$\alpha_n(\u)= \sqrt{n}(\HH_n-H)(\u)$ and $\alpha_{n,j}(u_j)= \sqrt{n}(\FF_{n,j}-F)(u_j)$
 (see Proposition
\ref{prop:Z_n}). \\

{\sc Remark.} The additional requirement (C1) is weaker than imposing a H\"older
condition on the derivatives. Segers (2012) imposes a slightly
stronger condition on the second-order partial derivatives of $C$
(corresponding to $r=1$) to obtain an almost sure representation of
the empirical copula process.

    Indeed, as a counterexample, consider the bivariate Archimedean copula $C$ whose generator is given by $\psi:(0,1]\rightarrow \RR^+$, $\psi(t) := \exp(t^{-\theta}) -e$ for some $\theta>0$.
    This copula, see display (4.2.20) in Nelsen (2006), is
    $$ C(u_1,u_2)=\left[ \ln\left( \exp(u_1^{-\theta})+\exp(u_2^{-\theta}) -e\right)\right]^{-1/\theta},$$
    for any $\u\in[0,1]^2$.
    It can be checked easily that, when $u\rightarrow 0$, the copula density
    $$ C_{12}(u,u) \sim \frac{\theta^2}{4}u^{-\theta - 1}.$$
    Therefore, $C$ cannot fulfill Condition 4.1 in Segers (2012). Nonetheless, by the mean value theorem and simple calculations, we can prove that
    $$ | C_1(\u) - C_1(\v) | \leq K (\min(u_1,v_1))^{-2\theta - 2} |u_1 - v_1| + K (\min(u_1,v_1))^{-\theta - 1}|u_2 -v_2|.$$
    Since the same reasoning can be done with $C_2$, our condition (C1) is fulfilled.\\

 The second assumption (C2)  allows for sub-logarithmic rate in the sample size for the number of boxes considered.  In practice, even this fairly slow rate yields much better tests, see our simulations in Section 4. And we have not observed any significant differences empirically between choosing $\gamma=1$ and $\gamma$ closed to one.\\

Our first result states that the processes $\ZZ_n$ and $\ZZ_n^*$ are close in the bounded Lipschitz distance
that characterizes  weak convergence.
Formally, we show that
\begin{eqnarray}\label{w}
\EE \left[  \sup_{h} \left| \EE [ h(\ZZ_n) ] - \EE^* [ h(\ZZ_n^*) ] \right| \right]
\end{eqnarray} is asymptotically negligible. Here $\EE^*$ is the conditional  expectation with respect to the bootstrap sample and
the supremum in (\ref{w}) is taken over $BL_1= BL_1(\ell^\infty(\F_n))$, the class of
 all uniformly bounded, Lipschitz   functionals $h:\ell^\infty(\F_n)\to\RR$ with Lipschitz constant 1, that is,
\begin{eqnarray}\label{BL1}
\sup_{x\in \ell^\infty(\F_n)} |h(x)|\le 1\end{eqnarray}
 and, for all $x,y\in \ell^\infty(\F_n)$,
\begin{eqnarray}
|h(x)-h(y)|\le   \sup_{f\in\F_n} | x(f)- y(f) |.\label{BL2}
\end{eqnarray}

\begin{thm}\label{Thm1}
Let $\ZZ_n=\{\ZZ_n(f),\ f\in \F_n\}$ and $\ZZ_n^*=\{\ZZ_n^*(f),\ f\in \F_n\}$ with $\F_n$ as defined in (\ref{F:class}) above.
Under conditions (C1) and (C2), we have
\begin{eqnarray}
\lim_{ n\to\infty} \EE\left[ \sup_{h\in BL_1 } \left| \EE [h(\ZZ_n)] - \EE^* [h(\ZZ_n^*)] \right| \right]=0.
\end{eqnarray}
\label{ThFondam}
\end{thm}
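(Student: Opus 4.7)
The plan is to reduce the bootstrap consistency statement on the $n$-dependent class $\F_n$ to two pieces using the well-known Hadamard linearization of the copula functional, and then to apply the Radulovi\'c (2012) bootstrap theorem (quoted in the appendix as Theorem~\ref{dragan2012}) to the main piece while handling the marginal correction terms by hand. Under (C1), Proposition~\ref{prop:Z_n} gives the uniform expansion
\[
\ZZ_n(\u) = \alpha_n(\u) - \sum_{j=1}^d C_j(\u)\,\alpha_{n,j}(u_j) + r_n(\u),
\]
with $\alpha_n$ an empirical process of the pseudo-observations indexed by $[0,1]^d$, $\alpha_{n,j}$ the univariate marginal empirical processes, and a remainder $r_n$ negligible in $\ell^\infty([0,1]^d)$. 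An entirely analogous representation holds for $\ZZ_n^*$. Evaluating both expansions on box increments via (\ref{Z_n:box}) and summing the absolute values over $L_n$ disjoint boxes shows that the box-wise remainder contributes $O_\PP(L_n \cdot \|r_n\|_\infty)$; under (C2) this is still $o_\PP(1)$, so it is enough to prove that the bootstrap is consistent in $BL_1(\ell^\infty(\F_n))$ for the linearized main term and for each of the $d$ correction terms separately.

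For the main term, boxes $\prod_{j=1}^d (a_j,b_j]$ in $[0,1]^d$ form a VC-class with a fixed index, and $\F_n$ is precisely the set of sums $\sum_{k=1}^{L_n} c_k \1_{B_k}$ with $c_k\in\{-1,+1\}$ and $B_1,\ldots,B_{L_n}$ disjoint members of that VC-class. With $L_n=(\log n)^\gamma$, $0<\gamma<1$, this matches the hypotheses of Theorem~\ref{dragan2012} verbatim, yielding
\[
\lim_{n\to\infty}\EE\left[\sup_{h\in BL_1}\bigl|\EE[h(\alpha_n)]-\EE^*[h(\alpha_n^*)]\bigr|\right]=0
\]
when both processes are restricted to $\F_n$. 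For each correction term, the box-difference operator $\Delta^1_{a_1,b_1}\cdots\Delta^d_{a_d,b_d}$ applied to $C_j(\u)\alpha_{n,j}(u_j)$ collapses, because $\alpha_{n,j}$ depends only on $u_j$, to $\alpha_{n,j}(b_j)-\alpha_{n,j}(a_j)$ multiplied by the box increment of $C_j$ in the remaining coordinates plus a term integrating $\alpha_{n,j}$ against the one-dimensional Lebesgue--Stieltjes measure $\mathrm{d}_{u_j} C_j$; this is where condition (C1) (bounded variation of $C_j$) is used. The total-variation masses of $C_j$ across the $L_n$ disjoint boxes are additive and uniformly bounded, so the sum over boxes of these contributions is controlled by a constant times $\sup_{u}|\alpha_{n,j}(u)|$, a one-dimensional quantity for which bootstrap consistency is classical.

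The main technical obstacle is the uniform control of the Hadamard remainder $r_n$ summed over $L_n=(\log n)^\gamma$ disjoint boxes, since unlike the standard Donsker framework the indexing class $\F_n$ itself grows with $n$ and one must get a rate, not merely qualitative convergence. The tools for this are Lemma~9 and Proposition~10 in Section~\ref{myproofs}, which give an approximation rate for $\ZZ_n-\ZZ_n^{\mathrm{lin}}$ on box increments that is compatible with multiplication by $L_n=(\log n)^\gamma$; together with the above VC-argument on the linearized main term and the bounded-variation bound on the $d$ correction terms, a triangle inequality in $BL_1$, exploiting the Lipschitz property (\ref{BL2}) with respect to the $\sup_{f\in\F_n}$-norm, concludes the proof. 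The requirement $\gamma<1$ in (C2) is exactly what keeps $L_n$ times the remainder rate negligible.
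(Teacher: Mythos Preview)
Your overall architecture matches the paper: linearize $\ZZ_n$ and $\ZZ_n^*$ via Propositions~\ref{prop:Z_n} and~\ref{prop:Z_n^*}, absorb the remainders (which are $o_p(1/L_n)$ uniformly), and then invoke Theorem~\ref{dragan2012} on the linearized piece. The handling of the main term $\alpha_n$ on $\F_n$ is fine. The essential difference is that the paper does \emph{not} split off the correction terms: it writes $\widetilde\ZZ_n(f)=\int h_f\,d\alpha_n$ with
\[
h_f(\x)=\sum_{k=1}^{2^dL_n}\sigma_k\Bigl\{\1\{\x\le\s_k\}-\sum_{j=1}^d C_j(\s_k)\1\{x_j\le s_{k,j}\}\Bigr\},
\]
and applies Theorem~\ref{dragan2012} once to the class $\H_n=\{h_f:f\in\F_n\}$. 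The bounded-variation hypothesis on $C_j$ is used there to prove the \emph{envelope} bound $|h_f|\le 1+d+\sum_j TV(C_j)$, via a lattice-refinement argument; it is not used to pass to a one-dimensional Lipschitz reduction.

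Your separate treatment of the correction terms has a gap. The product rule gives, for a box $B=\prod_l(a_l,b_l]$,
\[
\Delta_B\bigl[C_j(\u)\alpha_{n,j}(u_j)\bigr]=\alpha_{n,j}(b_j)\,C_j(B)+\bigl[\alpha_{n,j}(b_j)-\alpha_{n,j}(a_j)\bigr]\,C_j(B_{-j}\mid a_j),
\]
with $C_j(B_{-j}\mid a_j)=\PP\{\X_{-j}\in B_{-j}\mid X_j=a_j\}\in[0,1]$. Summing the first piece over $L_n$ disjoint boxes is indeed bounded by $TV(C_j)\,\|\alpha_{n,j}\|_\infty$, as you say. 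But the second piece is \emph{not} controlled by a constant times $\|\alpha_{n,j}\|_\infty$: the weights $C_j(B_{k,-j}\mid a_{k,j})$ need not be additive over disjoint $B_k$, because the projections $B_{k,-j}$ are not disjoint in general (take $L_n$ boxes stacked along the $j$-th axis, each with full cross-section; every weight equals $1$ and the sum is $L_n$). Hence the map $\alpha_{n,j}\mapsto(\text{correction}_j(f))_{f\in\F_n}$ has Lipschitz constant of order $L_n$, not $O(1)$, and the reduction ``classical $1$-d bootstrap suffices'' does not go through. The fix is either to keep the correction inside $h_f$ and quote Theorem~\ref{dragan2012} once (as the paper does), or to apply Theorem~\ref{dragan2012} again to the one-dimensional process $\alpha_{n,j}$ indexed by the class $\{g_{f,j}\}$ of the induced integrands, checking the envelope $TV(C_j)+1$ via the paper's lattice argument; either way you cannot avoid that computation.

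A smaller point: your final sentence misattributes the role of $\gamma<1$. The remainder rates in Propositions~\ref{prop:Z_n} and~\ref{prop:Z_n^*} are polynomial in $n$, so multiplying by $L_n=(\log n)^\gamma$ is harmless for any $\gamma>0$. The restriction $\gamma<1$ is a hypothesis of Theorem~\ref{dragan2012} (it caps $\sum_i V_{i,n}$), not a byproduct of the remainder control.
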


\begin{cor}
Under conditions (C1) and (C2) and for any Lipschitz
functional $\phi :\ell^{\infty }(\mathcal{F}_{n})\rightarrow \RR$, we have
\[
\lim_{n\rightarrow \infty }\EE\left[ \sup_{g}|\EE[g(\phi (\ZZ_{n}))]-E^{\ast
}[g(\phi (\ZZ_{n}^{\ast }))]\right] =0.
\]
The supremum is taken over all uniformly bounded Lipschitz functions
$g:\RR\rightarrow \RR$ with $\sup_{x}|g(x)|\leq 1$ and $|g(x)-g(y)|\leq |x-y|$.
\label{cor_Z_n}
\end{cor}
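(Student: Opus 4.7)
The plan is to reduce Corollary \ref{cor_Z_n} directly to Theorem \ref{ThFondam} by absorbing $\phi$ into the test function and rescaling. Fix a Lipschitz functional $\phi: \ell^\infty(\F_n) \to \RR$ with Lipschitz constant $L_\phi < \infty$, i.e.
\[
|\phi(x) - \phi(y)| \le L_\phi \sup_{f \in \F_n} |x(f) - y(f)| \quad \text{for all } x, y \in \ell^\infty(\F_n),
\]
and fix any $g : \RR \to \RR$ with $\sup_x |g(x)| \le 1$ and $|g(x) - g(y)| \le |x-y|$. Setting $h := g \circ \phi$, the composition satisfies $\sup|h| \le 1$ and
\[
|h(x) - h(y)| = |g(\phi(x)) - g(\phi(y))| \le |\phi(x) - \phi(y)| \le L_\phi \sup_{f \in \F_n} |x(f) - y(f)|,
\]
so $h$ is bounded by $1$ and Lipschitz on $\ell^\infty(\F_n)$ with constant at most $L_\phi$.

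Now set $M := \max(L_\phi, 1)$ and observe that $h/M$ satisfies both (\ref{BL1}) and (\ref{BL2}), so $h/M \in BL_1(\ell^\infty(\F_n))$. By linearity of the (conditional) expectation,
\[
\bigl| \EE[g(\phi(\ZZ_n))] - \EE^*[g(\phi(\ZZ_n^*))] \bigr| = M \bigl| \EE[(h/M)(\ZZ_n)] - \EE^*[(h/M)(\ZZ_n^*)] \bigr| \le M \sup_{\tilde h \in BL_1} \bigl| \EE[\tilde h(\ZZ_n)] - \EE^*[\tilde h(\ZZ_n^*)] \bigr|.
\]
The right-hand side does not depend on $g$, so taking the supremum over admissible $g$ on the left and then outer expectation gives
\[
\EE\Bigl[\sup_g \bigl| \EE[g(\phi(\ZZ_n))] - \EE^*[g(\phi(\ZZ_n^*))] \bigr|\Bigr] \le M \cdot \EE\Bigl[\sup_{\tilde h \in BL_1}\bigl|\EE[\tilde h(\ZZ_n)] - \EE^*[\tilde h(\ZZ_n^*)]\bigr|\Bigr],
\]
which tends to $0$ by Theorem \ref{ThFondam}. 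There is no substantive obstacle here beyond the bookkeeping; the only point worth highlighting is the rescaling by $M$, needed so that the composite $h/M$ genuinely lands in the normalized class $BL_1$ to which Theorem \ref{ThFondam} applies, and the homogeneity of the bounded-Lipschitz metric then allows the constant $M$ to be pulled out.
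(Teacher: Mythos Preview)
Your proof is correct and takes essentially the same approach as the paper: the paper simply observes that for fixed $\phi$, the compositions $g\circ\phi$ form a class of uniformly bounded Lipschitz functions on $\ell^\infty(\F_n)$, so Theorem~\ref{ThFondam} applies directly. You are slightly more explicit about the rescaling by $M=\max(L_\phi,1)$ to land in $BL_1$, a detail the paper leaves implicit.
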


Corollary~\ref{cor_Z_n} follows directly from Theorem~\ref{ThFondam} since, for a fixed Lipschitz function $\phi$, the set  of compositions $g\circ \phi$ above  is a class of uniformly  bounded Lipschitz functions (with the same
  Lipschitz constant).
In particular, since  the mapping $\phi(X)= \sup_{f\in\F_n} |X(f)|$ is Lipschitz, Corollary~\ref{cor_Z_n} implies that
 we can  approximate the distribution of the statistic $\TT_n$ by the conditional (bootstrap) distribution of
\begin{eqnarray}
\label{T_n^*}
\TT_n^*= \sup_{f\in\F_n} | \ZZ_n^*(f)|= \sup_{B_{1},\ldots ,B_{L_n}}\sum_{k=1}^{L_n}|\mathbb{Z}_{n}^*(B_{k})|.
\end{eqnarray}

\begin{cor}
Under conditions (C1) and (C2), we have
 \begin{eqnarray}
\lim_{n\to\infty}\EE\left[ \sup_{g} \left| \EE\left[  g\left(\TT_n \right)\right] -\EE^*\left[g\left(  \TT_n^* \right)\right]\right| \right]=   0.
  \end{eqnarray}
The supremum is taken over all uniformly bounded Lipschitz functions $g:\RR\to\RR$ with $\sup_x |g(x)| \le 1$ and $|g(x)-g(y)|\le |x-y|$ for all $x,y\in\RR$.
\label{corT_n}
\end{cor}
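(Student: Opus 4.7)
The plan is to derive Corollary \ref{corT_n} as an immediate consequence of Corollary \ref{cor_Z_n}, by applying the latter to one specific Lipschitz functional on $\ell^{\infty}(\mathcal{F}_n)$. The natural choice is $\phi : \ell^{\infty}(\mathcal{F}_n) \to \RR$ defined by $\phi(X) := \sup_{f \in \mathcal{F}_n} |X(f)|$, since under this map the image of the empirical copula process $\ZZ_n$ is precisely $\TT_n$, while the image of its bootstrap counterpart $\ZZ_n^*$ is $\TT_n^*$ as defined in (\ref{T_n^*}).

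First I would verify that $\phi$ is $1$-Lipschitz on $\ell^{\infty}(\mathcal{F}_n)$ equipped with its sup-norm. This reduces to the elementary estimate
\[
\bigl|\sup_{f \in \mathcal{F}_n} |X(f)| - \sup_{f \in \mathcal{F}_n} |Y(f)|\bigr| \le \sup_{f \in \mathcal{F}_n} |X(f) - Y(f)|,
\]
which follows from the reverse triangle inequality applied pointwise in $f$ and holds uniformly in $n$, regardless of how the index set $\mathcal{F}_n$ varies with $n$.

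Next I would substitute this $\phi$ into the conclusion of Corollary \ref{cor_Z_n}. Using $\phi(\ZZ_n) = \TT_n$ and $\phi(\ZZ_n^*) = \TT_n^*$, that conclusion becomes
\[
\lim_{n\to\infty} \EE\left[ \sup_g \bigl| \EE[g(\TT_n)] - \EE^*[g(\TT_n^*)] \bigr| \right] = 0,
\]
where the supremum is already taken over exactly the class of uniformly bounded $1$-Lipschitz functions $g:\RR\to\RR$ that appears in the target statement. This is Corollary \ref{corT_n} verbatim.

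There is no genuine obstacle. All substantive work has been carried out upstream: Theorem \ref{ThFondam} establishes that $\ZZ_n$ and $\ZZ_n^*$ are close in the bounded Lipschitz metric on $\ell^{\infty}(\mathcal{F}_n)$ even though neither process converges individually, and Corollary \ref{cor_Z_n} lifts this closeness through any Lipschitz image. Corollary \ref{corT_n} is simply the specialization to the particular functional $\phi = \sup_{f \in \mathcal{F}_n}|\cdot(f)|$ that produces the total-variation-type test statistic of interest.
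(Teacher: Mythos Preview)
Your proposal is correct and matches the paper's own approach exactly: the paper states just before Corollary~\ref{corT_n} that the mapping $\phi(X)=\sup_{f\in\F_n}|X(f)|$ is Lipschitz and that Corollary~\ref{cor_Z_n} therefore yields the result. Your additional verification of the $1$-Lipschitz property via the reverse triangle inequality is a welcome detail the paper omits.
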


Actually, $\TT_{n}$ and $\widetilde \TT_n$ are   just  two examples of many
potentially useful asymptotic variation type statistics. We mention two other possible statistics:
\begin{itemize}
\item
\textbf{Generalized} $\chi^{2}$ \textbf{statistics}.
Form an equidistant  grid $i/p$, $i=0, \ldots,p =  \left\lfloor L_{n}^{1/d}\right\rfloor +1$
on each axis of $[0,1]^d$, and use the $(p+1)^d$ points of the resulting  equidistant grid on $[0,1]^d$ as the corners of $p^d$ disjoint boxes $B_i$. We define the  statistic $\sum_i | \ZZ_{n}(B_i)|^2$, which, for fixed $L_{n}$,  reduces to a non-normalized $
\chi^{2}$ statistics, in the same spirit as in Dobri\'{c} and Schmid (2005). Here, since the statistic as a function of $\ZZ_n$  is Lipschitz on $\ell^\infty(\F_n)$,   $L_{n}\rightarrow \infty$ is allowed.
 However,  we  suspect that the
full power of Theorem 1 is not needed, since
Radulovi\'c (2013) proved a result similar to Theorem 1 via a more
direct approach, in the non-copula, i.i.d. setting  under  a weaker restriction on the  partition size.\\

\item

\textbf{Generalized Kuiper statistics. }
We start  with the usual Kuiper statistics
\[ K_{1}=\ZZ_n(B_1) =
\sup_{B}      |{\ZZ}_{n}(B)|,\] where supremum is taken over all boxes $B\subseteq[0,1]^d$, and
achieved at $B_{1}$. Then we define recursively, given boxes $B_{1},...,B_{m}$ with $m<L_{n}$,
\[
K_{m+1}=\ZZ_n(B_{m+1})=\sup_{B\cap B_{j}=\emptyset, \, j=1,\ldots,m }|
{\ZZ}_{n}(B)| .\]
The supremum is taken over all boxes $B$ that are disjoint with $B_1,\ldots, B_m$, and  we denote by $B_{m+1}$ for the box at which supremum is achieved. The resulting sum $\sum_{j=1}^{L_n} K_j$ of statistics $K_j$, based on disjoint boxes $B_j$,
is a Lipschitz functional of $\ZZ_n$ and Corollary 2 applies  to this statistics  as well.

\end{itemize}

The performance and the actual implementation of these additional
statistics will not be discussed here, but we will report on them elsewhere.
This paper offers a numerical study only as a proof of principle and for
this purpose we used  the straightforward  statistic $\widetilde{\TT}_{n}$  and  optimization scheme (pure random search) to demonstrate the applicability of Theorem~\ref{ThFondam}.
Nevertheless, even this conservative approach resulted in a
superior performance.\\


\bigskip


\textsc{Remark. }
We may approximate the $\alpha$-upper point of the statistic $\TT_n$  by that of the bootstrap counterpart $\TT_n^*$. Unlike the classical bootstrap situation that assumes a continuous limiting distribution
function,
the bootstrap quantile approximation can be used as follows. Let
$\varepsilon >0$ be arbitrary (independent of $n$) and define the  Lipschitz function
\[ g
_{t,\varepsilon }(x)=\1\{x\leq t\}+\frac{t+\varepsilon -x}{\varepsilon }%
\1\{t<x\leq t+\varepsilon \}.
\]
We have, for  $\delta_n:=\sup_h \left| \EE[h(\TT_{n})]- \EE^*[h(\TT_{n}^{\ast })]\right|$ with the supremum taken over all $h\in BL_1$, uniformly in $t\in\RR$,
\begin{eqnarray*}
\PP\left\{ \TT_{n}\leq t \right\} 
&=&
\EE^{\ast }\left[ g_{t,\varepsilon }(\TT_{n}^{\ast }) \right] + \EE\left[ g _{t,\varepsilon }(\TT_{n}) \right] - \EE^{\ast
}\left[ g _{t,\varepsilon }(\TT_{n}^{\ast }) \right]\\
&
\leq & \PP^{\ast }\left\{ \TT_{n}^{\ast }\leq t+\varepsilon  \right\}+ \delta_n/\eps,
\end{eqnarray*}
since $ g_{t,\eps}$ has Lipschitz constant $1/\eps$. A similar computation shows that
$
\PP^{\ast }\left\{ \TT_{n}^{\ast }\leq t-\varepsilon \right\} - \delta_n/\eps \le
\PP\left\{ \TT_{n}\leq t\right\},
$
so that, uniformly in $t$, and each $\eps>0$
\begin{eqnarray}
\PP^{\ast }\left\{ \TT_{n}^{\ast }\leq t-\varepsilon \right\} - \delta_n/\eps \leq
\PP\left\{ \TT_{n}\leq t\right\}
\leq \PP^{\ast }\left\{ \TT_{n}^{\ast }\leq t+\varepsilon \right\} + \delta_n/\eps
\end{eqnarray}
and in the same way we may prove
\begin{eqnarray}
\PP \left\{ \TT_{n} \leq t-\varepsilon \right\} - \delta_n/\eps \leq
\PP^\ast\left\{ \TT_{n}^\ast \leq t\right\}
\leq \PP \left\{ \TT_{n}\leq t+\varepsilon \right\} + \delta_n/\eps,
\end{eqnarray}
uniformly in $t$, and each $\eps>0$.
For instance, if $t^*$ is the bootstrap $95\%$ critical value of $\TT_n^*$, it is prudent to reject the null for values of $\TT_n$ larger than $t^*+ \eps$.   \\


{\sc Remark.} The test for ${\rm H}_0:\ C=C_0$ based on the critical regions $\{\TT_n > c\}$ is consistent. Indeed, under the null,
since $ \TT_n\le L_n \sup_B |\ZZ_n(B)| $,
we have
$L_n^{-1} \TT_n$ is bounded in probability, while under the alternative hypothesis,
${\rm H}_A\,:C=C_1$ for a fixed $C_1\ne C_0$, we have that
$ \TT_n \ge \sqrt{n} | C_0(B)- C_1(B) | - |\ZZ_n(B)| $, so that
$n^{-1/2} \TT_n \ge \frac12 |C_0(B)- C_1(B) | $, with  probability tending to one,
 for any box $B$ where $C_0$ and $C_1$ differ. Such a box exists under the alternative and the increasing sequence $\F_n$ likely contains at least one such box for relatively small $n$. The improved  power
 of our  test statistic  is confirmed in our simulation
study.
\\

\section{Parametric hypothesis}
\label{Parametric}
\setcounter{equation}{0}

In this section we consider the problem of
testing if  the underlying copula $C$ belongs to a parametric family
$\C:=\{ C_{\theta},\ \theta\in \Theta \}$. That is, the null hypothesis states that
 $C=C_{\theta_0}$ for some $\theta_0\in\Theta$. Here $\Theta \subset \RR^p$, equipped with the Euclidean norm $\|\cdot\|_2$.
 Suppose that we
have a consistent estimator $\wh\theta=\wh\theta(\HH_n)$ of
$\theta_0$.\\

Replacing $C_0$ by $C_{\wh\theta}$ in the definition of the test statistic $\TT_n$, we consider the process
\begin{eqnarray}
\label{Y_n}
\YY_n=
\sqrt{n}(\CC_n -C_{\wh\theta}) = \ZZ_n -
\sqrt{n}(C_{\wh\theta}- C),
\end{eqnarray}
 and its bootstrap version
\begin{eqnarray}
\label{Y_n^*}
\YY_n^* = \ZZ_n^* - \sqrt{n}(C_{\wh\theta^*} - C_{\wh\theta}),
\end{eqnarray}
based on the {\em
non-parametric} bootstrap estimate
$\wh\theta^*=\wh\theta(\HH_n^*)$,  obtained
after  resampling with replacement from the original sample.
Note that
\begin{eqnarray}
\YY_n^*=  \sqrt{n}(\CC_n ^*-C_{\wh\theta^*}) - \sqrt{n}(\CC_n - C_{\wh\theta})\ne \sqrt{n}(\CC_n ^*-C_{\wh\theta^*}).
\end{eqnarray}
The  process $\sqrt{n}(\CC_n ^*-C_{\wh\theta^*})$, while perhaps a natural candidate, does not yield a consistent estimate of the distribution of $\YY_n$. Indeed, the ``distance" between $\YY_n$ and the latter process will be of the order of $\ZZ_n^*$, thus asymptotically tight. On the other hand, the distance between $\YY_n$ and $\YY_n^*$ will be of the same order of magnitude as the distance between $\ZZ_n$ and $\ZZ_n^*$, that tends to zero (see the proof of Theorem~\ref{Thm1}).\\

We stress that our approach does not involve the {\em parametric}
bootstrap, as studied by Genest and R\'emillard (2008), to estimate
the limiting law of copula-based statistics. In other words, we
calculate $\wh\theta^*$ after resampling from the empirical
distribution
$\HH_n$, and not from the law given by the parametric copula $C_{\wh\theta}$.\\

We impose some regularity on
our parameter estimate $\wh\theta$.
\begin{itemize}
\item[(C3)] There exists a   $\psi:\RR^d \mapsto \RR^p$ with  $\int\| \psi\|_2^4 \, {\rm d}H<\infty$ such that
$$ \wh\theta - \theta_0 = \int \psi \, d(\HH_n - H) + \eps_n,\; \text{and} \;\;
 \wh\theta^* - \wh\theta = \int \psi \, d(\HH_n^* - \HH_n) + \eps^*_n,$$
under the null hypothesis, with $\|\eps_n\|_2=o_p(n^{-1/2}/L_n)$ and
$\|\eps^*_n\|_2=o_{p^*}(n^{-1/2}/L_n)$ in probability.
\end{itemize}
Note that the
estimators satisfying (C3) are closely related to the estimators  in the class $\R$
of regular estimators, as defined by Genest and R\'emillard (2008).\\

{\sc Example}
 (Estimators based on the inversion of Kendall's tau).
As an example, we verify condition (C3) for estimators based on the inversion of Kendall's tau in the bivariate case ($d=2$).
Let $\theta=g(\tau)$ for some twice differentiable function $g$ and Kendall's
$\tau:=4\EE[C_\theta(U,V)]-1,$ with the expectation   taken over   $(U,V)\sim C_\theta$.
 Kendall's $ \tau $ is estimated empirically by
$$\wh \tau_n :=
\frac{4}{n(n-1)} \sum_{i=1}^n \sum_{j=i+1}^n \1\left\{
(Y_j-Y_i)(X_j-X_i) >0 \right\} -1.$$ Then $U_n := \wh\tau_n +1$ is a
U-statistic of order 2 for the kernel
\[ h\left( (x_1,y_1); (x_2,y_2) \right) = 2 \cdot \1\{ (y_2-y_1)(x_2-x_1)>0\}.\]
The projection of $U_n- \EE[U_n]$ onto the space of all statistics
of the form $\sum_{i=1}^n g_i(X_i,Y_i)$, for arbitrary measurable
functions $g_i$ with $\EE[g_i^2(X,Y)]<\infty$, is
\[ \wh U_n= \sum_{i=1}^n \EE [ U_n-\EE[U_n] \, |\, X_i,Y_i] =\frac2n \sum_{i=1}^n \{ \psi(X_i,Y_i)-\EE[ \psi(X_i,Y_i)] \}
\]
with
\[ \psi(x,y) =   P(X<x,Y<y)+P(X>x,Y>y).
\]   By H\'ajek's projection principle,
\[ \text{Var} (U_n-\wh U_n) = \text{Var}(U_n) - \text{Var}(\wh U_n).
\]
From the proof of Theorem 12.3 in Van der Vaart (1998), due to Hoeffding (1948),
\[ \text{Var}(U_n) - \text{Var}(\wh U_n) = \frac{4(n-2)}{n(n-1)} \zeta_1 + \frac{2}{n(n-1)} \zeta _2 - \frac{4}{n} \zeta_1 =  \frac{2\zeta_2 -4\zeta_1}{n (n-1)}  \]
with $\zeta_1=\text{Cov}( h(X, Y_1), h(X,Y_2))$ for  $X$ independent of $Y_1$ and $Y_2$, and with the same distribution as $X_1$, and $\zeta_2=\text{Var}( h(X_1, Y_1))$.
Thus the difference is $\text{Var}(U_n) - \text{Var}(\wh U_n)$ is of order $O(1/n^2)$.
Consequently, $U_n-\EE[U_n] =\wh U_n+R_n$ with $R_n=O_p(1/n)$ so that
\[ \wh \tau_n -\tau = U_n- \EE[U_n] = \wh U_n + R_n = \frac{2}{n} \sum_{i=1}^n  \{ \psi(X_i,Y_i)-\EE[ \psi(X_i,Y_i)] \} +
O_p(1/n).
\]
Hence, if $g$ is twice continuously
differentiable in the neighborhood of $\tau$, a limited expansion
ensures that $\hat\theta:= g(\wh \tau_n)$ satisfies the first part of (C3). The second (bootstrap) part of (C3) follows from the same reasoning:
We set $\wh \tau_n^* := U_n^*-1$ with
$$ U_n^* = \frac{4}{n(n-1)} \sum_{i=1}^n \sum_{j=i+1}^n \1\left\{
(Y_j^*-Y_i^*)(X_j^*-X_i^*) >0 \right\} $$ and for
\[ \wh U_n^*= \sum_{i=1}^n \EE^* [ U_n^*-\EE^*[U_n^*] \, |\, X_i^*, Y_i^*]
\]
we can show that
\[ \text{Var}^* (U_n^*-\wh U_n^*) = \text{Var}^*(U_n^*) - \text{Var}^*(\wh U_n^*)
\] is of order $O(1/n)$ almost surely, using the same arguments as above, keeping in mind that the empirical counterparts of $\zeta_1$ and $\zeta_2$ are bounded everywhere.
Moreover, for
\[ \psi_n(x,y) =   \frac1n \sum_{i=1}^n \1\{X_i<x,Y_i<y\} + \frac1n \sum_{i=1}^n \1\{ X_i>x,Y_i>y\},
\] we find
\begin{eqnarray*}
 \wh U_n^* &= &\sum_{i=1}^n \EE^* [ U_n^*-\EE^*[U_n^*] \, |\, X_i^*, Y_i^*]  \\
 &=& \frac{2}{n} \sum_{i=1}^n \psi_n( X_i^*,Y_i^*) - \EE^*[\psi_n( X_i^*,Y_i^*)]\\
 &=&  \frac{2}{n} \sum_{i=1}^n \left\{ \psi( X_i^*,Y_i^*) - \EE^*[\psi( X_i^*,Y_i^*)]\right\} +
  \frac{2}{n} \sum_{i=1}^n \left\{ (\psi_n-\psi)( X_i^*,Y_i^*) - \EE^*[(\psi_n-\psi)( X_i^*,Y_i^*)]\right\} .
\end{eqnarray*}
The second term on the right is of order $O_{p^*}(1/n)$ as its variance equals
\[ \frac4n \text{Var}^* \left ( (\psi_n-\psi) (X_1^*,Y_1^*)\right) \le \frac4 n \sum_{i=1}^n  (\psi_n-\psi)^2( X_i,Y_i) =  O_{p^*}(1/n^2),
\] by the reasoning in Bickel and Freedman (1981, p.1202).
This implies
\[
 \wh \tau_n^* -\wh \tau_n = U_n^*- \EE^*[U_n^*] =  \frac{2}{n}\sum_{i=1}^n  \{ \psi(X_i^*,Y_i^*)-\EE^*[ \psi(X_i^*,Y_i^*)] \}
 + O_{p^*}(1/n).
\]
Again, for a  $g$ that  is twice continuously
differentiable in the neighborhood of $\tau$, a limited expansion
ensures that $\hat\theta^*$ satisfies the second part of (C3).
\\

Moreover, we need more regularity concerning $\theta\mapsto
C_\theta$ itself.
\begin{itemize}
\item[(C4)] For every $(s,t)\in [0,1]^d$, the function $\theta \mapsto C_\theta(\u)$ has continuous partial derivatives
$\dot{C}_\theta(\u)= (\partial /\partial \theta) C_\theta(\u)$
that satisfy a H\"older condition with H\"older exponent $\nu>0$
locally: there exists a constant $K<\infty$ such that
$$ \sup_{\u} \| \dot{C}_{\theta}(\u)- \dot{C}_{\theta_0}(\u)\|_2 \leq K \| \theta
-  \theta_0 \|_2 ^\nu, $$ for every $\theta$ in a neighborhood of
$\theta_0$. Moreover, $\dot{C}_{\theta_0}$ is of bounded
variation.
\end{itemize}

The regularity condition (C4) is satisfied for most of standard
copula families. Simple calculations show that it is the case for
the Gaussian-, Clayton- and the Frank-copula families in particular.
Although copula partial derivatives with respect to their arguments
often exhibit discontinuities or non-existence near their boundaries,
justifying conditions such as (C1) (see Segers, 2012), the derivatives
$\partial C_\theta(x,y)/\partial \theta$ with respect to the copula
parameter  $\theta$ behave a lot more regularly.

\begin{thm}
\label{CoreThParam} Let $\YY_n=\{\YY_n(f),\ f\in \F_n\}$ and
$\YY_n^*=\{\YY_n^*(f),\ f\in \F_n\}$ with $\F_n$ in
(\ref{F:class}) as defined above. Assume that conditions (C1),
(C2), (C3) and (C4) hold.  Then, under the null hypothesis
$\H_0:\, C=C_{\theta},\ \theta\in\Theta$, we have
\begin{eqnarray}
 \lim_{n\to\infty}  \EE\left[ \sup_{h\in BL_1 } \left| \EE[ h(\YY_n)] - \EE^* [h(\YY_n^*)] \right|  \right]  =0.\end{eqnarray}
\end{thm}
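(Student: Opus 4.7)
The plan is to reduce the composite-null result to Theorem \ref{Thm1} by linearizing the parameter-estimation error. Decompose $\YY_n(f) = \ZZ_n(f) - A_n(f)$ and $\YY_n^*(f) = \ZZ_n^*(f) - A_n^*(f)$, where $A_n(f) := \sqrt{n}\sum_k c_k (C_{\wh\theta} - C_{\theta_0})(B_k)$ and $A_n^*(f) := \sqrt{n}\sum_k c_k (C_{\wh\theta^*} - C_{\wh\theta})(B_k)$ for $f = \sum_k c_k \1_{B_k} \in \F_n$. Since Theorem \ref{Thm1} already gives the $BL_1$ bootstrap approximation of $\ZZ_n$ by $\ZZ_n^*$, the task reduces to showing that $A_n$ and $A_n^*$ admit a common linear representation in terms of the underlying empirical process.

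By (C4), a Taylor expansion in $\theta$ yields $(C_{\wh\theta} - C_{\theta_0})(\u) = (\wh\theta - \theta_0)^\top \dot{C}_{\theta_0}(\u) + r_n(\u)$ with $\sup_\u |r_n(\u)| = O(\|\wh\theta - \theta_0\|_2^{1+\nu})$. The $d$-fold difference operator inflates this bound by $2^d$ on each box, so the total Taylor remainder over the $L_n$ boxes in $f$, multiplied by $\sqrt{n}$, is $O_p(L_n n^{-\nu/2}) = O_p((\log n)^\gamma n^{-\nu/2}) = o_p(1)$ by (C2) and the $n^{-1/2}$ rate from (C3). Because $\dot{C}_{\theta_0}$ is of bounded variation, $\sum_k \|\dot{C}_{\theta_0}(B_k)\|_2$ is uniformly bounded over disjoint families; plugging in $\sqrt{n}(\wh\theta - \theta_0) = \int\psi\,d\alpha_n + \sqrt{n}\eps_n$ from (C3), with $\sqrt{n}\|\eps_n\|_2 = o_p(1/L_n)$, gives uniformly in $f \in \F_n$,
\[
A_n(f) = \Bigl(\int \psi\,d\alpha_n\Bigr)^{\!\top} \dot{C}_{\theta_0}(f) + o_p(1),
\qquad
\dot{C}_{\theta_0}(f) := \sum_k c_k \dot{C}_{\theta_0}(B_k),
\]
where $\alpha_n = \sqrt{n}(\HH_n - H)$. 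The analogous argument centered at $\wh\theta$, using the Hölder estimate in (C4) to replace $\dot{C}_{\wh\theta}$ by $\dot{C}_{\theta_0}$ at cost $L_n \|\wh\theta - \theta_0\|_2^\nu \cdot \|\sqrt{n}(\wh\theta^* - \wh\theta)\|_2 = o_{p^*}(1)$, yields $A_n^*(f) = \bigl(\int \psi\,d\alpha_n^*\bigr)^\top \dot{C}_{\theta_0}(f) + o_{p^*}(1)$ with $\alpha_n^* = \sqrt{n}(\HH_n^* - \HH_n)$.

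The map $\Psi : \ell^\infty(\F_n)\times\RR^p \to \ell^\infty(\F_n)$ defined by $\Psi(x,t)(f) = x(f) - t^\top \dot{C}_{\theta_0}(f)$ is Lipschitz with a constant bounded uniformly in $n$, again by the bounded variation of $\dot{C}_{\theta_0}$. Hence it suffices to establish joint $BL_1$ bootstrap consistency of $(\ZZ_n, \int\psi\,d\alpha_n)$ and $(\ZZ_n^*, \int\psi\,d\alpha_n^*)$. The finite-dimensional marginal is standard (multivariate Bickel--Freedman under the fourth-moment assumption on $\psi$ in (C3)), and the joint statement follows because $\int\psi\,d\alpha_n$ can be threaded as a Lipschitz functional of $\alpha_n$ through the same coupling that drives the proof of Theorem \ref{Thm1}.

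The main obstacle is precisely this joint step: since $\ZZ_n$ does not converge weakly (the indexing class $\F_n$ grows with $n$), one cannot invoke a standard continuous mapping theorem and must instead reuse the coupling constructed for Theorem \ref{Thm1}, carrying the additional finite-dimensional component through it. The Taylor/linearization bookkeeping is delicate but routine; it depends crucially on (C2), because the sub-logarithmic rate $L_n = (\log n)^\gamma$ is exactly what forces both $L_n n^{-\nu/2}$ and $L_n \sqrt{n}\|\eps_n\|_2$ to vanish, keeping the cumulative error over the $L_n$ boxes in each $f \in \F_n$ negligible.
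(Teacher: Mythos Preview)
Your linearization of $A_n$ and $A_n^*$ via (C3)--(C4) is exactly what the paper does, and your bookkeeping of the remainders (orders $L_n n^{-\nu/2}$ and $L_n\sqrt{n}\|\eps_n\|_2$) is correct. The gap is in the joint step.

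You reduce the problem to joint $BL_1$ bootstrap consistency of the pair $\bigl(\ZZ_n,\int\psi\,d\alpha_n\bigr)$ versus $\bigl(\ZZ_n^*,\int\psi\,d\alpha_n^*\bigr)$, but you do not prove this; the sentence ``threaded as a Lipschitz functional of $\alpha_n$ through the same coupling that drives the proof of Theorem~\ref{Thm1}'' describes a hope, not an argument. Two concrete obstructions: first, $\ZZ_n$ is \emph{not} a functional of $\alpha_n$ (it involves the empirical quantile transform), so there is no common driving process until one has replaced $\ZZ_n$ by $\widetilde\ZZ_n$ via Propositions~\ref{prop:Z_n} and~\ref{prop:Z_n^*}; second, Theorem~\ref{Thm1} is obtained by invoking Radulovi\'c's Theorem~\ref{dragan2012} as a black box, not by an explicit coupling, so there is nothing to ``reuse'' without reopening that box.

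The paper closes both issues at once: after passing to $\widetilde\ZZ_n$, it absorbs the parametric correction into the indexing class, writing $\widetilde\YY_n(f)=\int\bigl(\sum_k\sigma_k g_k\bigr)\,d\alpha_n$ with
\[
g_k(\x)=\1\{\x\le\s_k\}-\sum_{j=1}^d C_j(\s_k)\1\{x_j\le s_{k,j}\}-\psi(\x)'\dot C_{\theta_0}(\s_k),
\]
and then reverifies the VC and envelope hypotheses of Theorem~\ref{dragan2012} for this enlarged class. The extra piece $\psi(\x)'\dot C_{\theta_0}(\s)$ lies in a $p$-dimensional linear span (hence VC of index~$p$) and contributes the envelope term $\|\psi(\x)\|_2\,TV(\dot C_{\theta_0})$, which is in $L^4$ by (C3) and (C4). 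This delivers the joint statement directly, with no separate coupling or product-space argument needed. Your Lipschitz-map reduction is sound in principle, but carrying it through would force you to do exactly this enlargement anyway.
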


This result implies that the distribution of  the test statistic
\begin{eqnarray}\wh{\TT}_n=   \sup_{f\in \F_n} \left| \YY_n(f)
\right| =\sup_{B_{1},\ldots ,B_{L_n}}\sum_{k=1}^{L_n}|\YY_{n}(B_{k})|
\end{eqnarray}
can be ``bootstrapped'' by  the distribution of
\begin{eqnarray}\wh{\TT}_n^*= \sup_{f\in\F_n} | \YY_n^*(f)|=\sup_{B_{1},\ldots ,B_{L_n}}\sum_{k=1}^{L_n}|\YY_n^*(B_{k})|.\end{eqnarray}

\begin{cor}
Assume  that
conditions (C1), (C2), (C3) and (C4) hold. Then, under the null hypothesis ${\rm H}_0: C=C_{\theta}$, $\theta\in\Theta$,
\begin{eqnarray} \lim_{n\to\infty} \EE\left[ \sup_{g} \left| \EE [ g (\wh{\TT}_n) ]- \EE^* [ g(\wh{\TT}_n^*)] \right| \right] =0,\end{eqnarray}
with the supremum taken over all Lipschitz functions $g:\RR\to[-1,1]$ with Lipschitz constant 1. \end{cor}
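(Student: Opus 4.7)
The plan is to deduce this corollary from Theorem \ref{CoreThParam} via the same continuous-mapping argument used to derive Corollary \ref{corT_n} from Theorem \ref{ThFondam}. The key point is that the statistics $\wh{\TT}_n$ and $\wh{\TT}_n^*$ are obtained by applying a single Lipschitz functional to the processes $\YY_n$ and $\YY_n^*$, so bootstrap consistency at the process level transfers automatically.

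First, I would introduce the map $\phi:\ell^\infty(\F_n)\to\RR$ defined by $\phi(X)=\sup_{f\in\F_n}|X(f)|$ and note that by construction $\wh{\TT}_n=\phi(\YY_n)$ and $\wh{\TT}_n^*=\phi(\YY_n^*)$. A brief check using the reverse triangle inequality shows that $\phi$ is Lipschitz with constant $1$ with respect to the natural norm on $\ell^\infty(\F_n)$, namely
\[
|\phi(X)-\phi(Y)|\le \sup_{f\in\F_n}|X(f)-Y(f)|,
\]
for every $X,Y\in\ell^\infty(\F_n)$.

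Next, for any Lipschitz $g:\RR\to[-1,1]$ with Lipschitz constant $1$, the composition $h:=g\circ\phi$ is uniformly bounded by $1$ and satisfies
\[
|h(X)-h(Y)|\le |\phi(X)-\phi(Y)|\le \sup_{f\in\F_n}|X(f)-Y(f)|,
\]
so $h\in BL_1(\ell^\infty(\F_n))$, in the sense of (\ref{BL1})–(\ref{BL2}). Consequently,
\[
\sup_{g}\bigl|\EE[g(\wh{\TT}_n)]-\EE^*[g(\wh{\TT}_n^*)]\bigr|
=\sup_{g}\bigl|\EE[(g\circ\phi)(\YY_n)]-\EE^*[(g\circ\phi)(\YY_n^*)]\bigr|
\le \sup_{h\in BL_1}\bigl|\EE[h(\YY_n)]-\EE^*[h(\YY_n^*)]\bigr|.
\]
Taking expectations on both sides and invoking Theorem \ref{CoreThParam}, the right-hand side tends to $0$ as $n\to\infty$, which yields the corollary.

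There is essentially no hard step here; the proof is a routine reduction. The only thing to verify carefully is the Lipschitz property of $\phi$ on $\ell^\infty(\F_n)$, which is immediate, so that $g\circ \phi$ sits inside the $BL_1$ class appearing in Theorem \ref{CoreThParam}. Everything else is just rewriting $\wh{\TT}_n=\phi(\YY_n)$ and $\wh{\TT}_n^*=\phi(\YY_n^*)$ and reading off the conclusion.
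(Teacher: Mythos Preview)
Your proposal is correct and follows exactly the route the paper intends: the corollary is an immediate consequence of Theorem~\ref{CoreThParam} via the same argument used to deduce Corollaries~\ref{cor_Z_n} and~\ref{corT_n} from Theorem~\ref{ThFondam}, namely that $\phi(X)=\sup_{f\in\F_n}|X(f)|$ is Lipschitz with constant~$1$, so each composition $g\circ\phi$ lies in $BL_1(\ell^\infty(\F_n))$.
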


Often, (C3) can be replaced by
\begin{itemize}
\item[(C3')] There exists a  $\psi:\RR^d \mapsto \RR^p$ with $\int \|\psi\|_2 ^4 \, {\rm d}C<\infty$ such that
$$ \wh\theta - \theta_0 =
 \frac{1}{{n}} \sum_{i=1}^n \left\{
\psi(\FF_{n,1}(X_{i,1}),\ldots,\FF_{n,d}(X_{i,d}) ) - \EE[\psi(F_1(X_{i,1}),\ldots , F_d(X_{i,d}))] \right\}
+ \eps_n,
$$
$$
\wh\theta^* - \wh\theta = \frac{1}{ {n}} \sum_{i=1}^n \left\{
\psi(\FF_{n,1}^*(X_{i,1}^*),\ldots,\FF_{n,d}^*(X_{i,d}^*)) -
\psi(\FF_{n,1}(X_{i,1}),\ldots,\FF_{n,d}(X_{i,d}))\right\} + \eps^*_n,$$ under the null
hypothesis, with $\|\eps_n\|_2=o_p(n^{-1/2}/L_n)$ and
$\|\eps^*_n\|_2=o_{p^*}(n^{-1/2}/L_n)$ in probability.
\end{itemize}

This is a consequence of
the following result.
\begin{prop}
Assume
(C1) holds. Any
 estimator  $\wh\theta$   satisfying (C3'), satisfies
(C3). \label{ThC3C3bis}
\end{prop}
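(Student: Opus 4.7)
The plan is to derive (C3) from (C3') via the probability-integral transform and the Segers-type almost-sure linearization of $\ZZ_n$ mentioned in Proposition \ref{prop:Z_n}, which is available under (C1).

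First, I would lift $\psi$ to $\RR^d$ by setting $\tilde\psi(\x):=\psi(F_1(x_1),\ldots,F_d(x_d))$ and note that the probability-integral transform yields $\int\|\tilde\psi\|_2^4\,dH=\int\|\psi\|_2^4\,dC<\infty$ and $\EE[\tilde\psi(\X)]=\EE[\psi(F_1(X_{1}),\ldots,F_d(X_{d}))]$. Writing $\UU_i=(F_1(X_{i,1}),\ldots,F_d(X_{i,d}))$ and $\wh C_n$ for their (unobservable) empirical cdf, (C3') becomes
\[
\wh\theta-\theta_0 \;=\; \int\tilde\psi\,d(\HH_n-H)\;+\;D_n\;+\;\eps_n,
\qquad D_n=\int \psi\,d(\CC_n-\wh C_n),
\]
and the entire task is to absorb $D_n$ into the main term up to an error of order $o_p(n^{-1/2}/L_n)$.

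For $D_n$, I would apply Stieltjes integration by parts (using that $\psi$ has bounded variation, a property that is inherited from the copula in all natural cases such as the Kendall example—otherwise one first regularizes $\psi$ by convolution) to get $D_n = -\int (\CC_n-\wh C_n)\,d\psi$, and then plug in the Segers representation
\[
\sqrt{n}(\CC_n-\wh C_n)(\u) \;=\; \ZZ_n(\u)-\alpha_n(\u) \;=\; -\sum_{j=1}^d C_j(\u)\,\alpha_{n,j}(u_j)\;+\;R_n(\u),
\]
with $\|R_n\|_\infty=o_p(L_n^{-1})$ guaranteed by Proposition~\ref{prop:Z_n} under (C1). The $R_n$ term contributes $n^{-1/2}\|R_n\|_\infty\,V(\psi)=o_p(n^{-1/2}/L_n)$, while each summand in the leading term becomes, by Fubini, a one-dimensional integral
\[
n^{-1/2}\int_0^1 \alpha_{n,j}(u_j) K_j(u_j)\,du_j,\qquad K_j(u_j):=\int_{[0,1]^{d-1}} C_j(\u)\,d\psi(\u)\big|_{u_j\text{ fixed}},
\]
where $K_j$ is bounded because $C_j$ is bounded (by (C1)) and $\psi$ has finite variation. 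A further Fubini, combined with $\alpha_{n,j}(u_j)=\sqrt{n}(\wh C_{n,j}(u_j)-u_j)$ and the definition $\tilde K_j(v):=\int_v^1 K_j(u)\,du$, rewrites this as
\[
\frac{1}{n}\sum_{i=1}^n\!\bigl[\tilde K_j(F_j(X_{i,j}))-\EE\tilde K_j(F_j(X_{i,j}))\bigr],
\]
i.e.\ an integral against $d(\HH_n-H)$ of the \emph{bounded} function $\x\mapsto\tilde K_j(F_j(x_j))$.

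Summing over $j$ gives $\wh\theta-\theta_0=\int \Psi\,d(\HH_n-H)+o_p(n^{-1/2}/L_n)$ with $\Psi(\x):=\tilde\psi(\x)+\sum_{j=1}^d \tilde K_j(F_j(x_j))$, and $\int\|\Psi\|_2^4\,dH<\infty$ follows from the $L^4$ bound on $\tilde\psi$ plus boundedness of the $\tilde K_j$. This is precisely the first identity in (C3). The second identity, for $\wh\theta^*-\wh\theta$, is obtained by repeating verbatim the same chain of steps after invoking the bootstrap analogue of Proposition~\ref{prop:Z_n} stated in the appendix and the bootstrap assumption in (C3'). The main obstacle is Step~2: I need the Segers remainder to satisfy $\|R_n\|_\infty=o_p(L_n^{-1})=o_p((\log n)^{-\gamma})$, which is strictly more than the classical $o_p(1)$ and requires the bounded-variation strengthening in (C1) to be exploited sharply; a secondary technical nuisance is justifying the multidimensional integration by parts without explicit regularity of $\psi$ in (C3'), which I would handle either by restricting to the (typical) BV case or by a mollification argument with bandwidth $\delta_n\to0$ chosen so that the smoothing error stays below $n^{-1/2}/L_n$.
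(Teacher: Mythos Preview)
Your approach matches the paper's: its proof is a two-line argument that invokes the polynomial rate $\sup_{\u}|\ZZ_n(\u)-\widetilde\ZZ_n(\u)|=O_p(n^{-\mu})$ (and its bootstrap analogue) extracted from the proofs of Propositions~\ref{prop:Z_n} and~\ref{prop:Z_n^*}, and then says ``the result follows after integration by parts.'' Your stated ``main obstacle'' is therefore not one: those proofs already deliver explicit negative-power-of-$n$ bounds (the terms $I$ and $IIa$ are $O(n^{-1/4}(\log n)^{1/2}(\log\log n)^{1/4})$ a.s.\ and $IIb=O(n^{-q})$ a.s.\ for some $q>0$), far stronger than the $o_p((\log n)^{-\gamma})$ you worry about. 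Your secondary nuisance---the bounded-variation hypothesis on $\psi$ needed for the Stieltjes integration by parts---is real, and the paper's proof glosses over it exactly as you do; it is an implicit regularity assumption rather than a gap specific to your argument.
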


Copula parameters are typically estimated  through
pseudo-observations or ranks,  without any assumption on the
marginal distributions.  For this reason  the
copula estimators that satisfy (C3') are relevant. They are very
closely related  to the estimators in the class $\R_1$ of
Genest and R\'emillard (2008). In particular, the maximum
pseudo-likelihood estimator, that maximizes
the pseudo log-likelihood function
$ \int \log c_\theta \, d\CC_n$ over $\theta\in\Theta$,
see, for instance,
Genest et al. (1995) or Shih and
Louis (1995), satisfies (C3') under suitable regularity conditions on the copula density $c_\theta$.\\

Since the bootstrapped copula process $\YY_n^*$ is new, it is noteworthy to stress that it provides a valuable alternative to the usual parametric bootstrap.
Now, assume $L_n=L$ is a constant, to retrieve the standard framework.
\begin{cor}
\label{CorNewBoots} Assume  that conditions (C1), (C3) and (C4)
hold. Then, the process $\{\YY_n(\u), \, \u\in [0,1]^d \}$ tends
weakly towards a Gaussian process  in $\ell^{\infty}([0,1]^d)$.
Moreover, the bootstrapped process $\{\YY_n^*(\u), \, \u \in [0,1]^d \}$ converges weakly to the same Gaussian  process  in probability
in $\ell^{\infty}([0,1]^d)$.
\end{cor}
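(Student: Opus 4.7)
\textbf{Proof proposal for Corollary~\ref{CorNewBoots}.} The plan is to decompose $\YY_n$ into a linear combination of the empirical copula process $\ZZ_n$ and the scaled estimator error $\sqrt{n}(\wh\theta-\theta_0)$, show joint weak convergence of these two objects, and then invoke the continuous mapping theorem; the bootstrap part follows by repeating the argument conditionally on the sample.

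First I would write, by a first-order Taylor expansion of $\theta\mapsto C_\theta(\u)$ around $\theta_0$,
\[
\sqrt{n}(C_{\wh\theta}-C_{\theta_0})(\u)=\dot C_{\theta_0}(\u)^\top\sqrt{n}(\wh\theta-\theta_0)+R_n(\u),
\]
where the H\"older condition in (C4) together with $\|\wh\theta-\theta_0\|_2=O_p(n^{-1/2})$ (which follows from (C3) since $L_n=L$) gives
$\sup_\u|R_n(\u)|\le K\sqrt{n}\|\wh\theta-\theta_0\|_2^{1+\nu}=o_p(1)$. Hence
\[
\YY_n(\u)=\ZZ_n(\u)-\dot C_{\theta_0}(\u)^\top\sqrt{n}(\wh\theta-\theta_0)+o_p(1),
\]
uniformly in $\u\in[0,1]^d$. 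Under (C1), Proposition~\ref{prop:Z_n} (or Fermanian et al.\ (2004) / Segers (2012)) gives $\ZZ_n\rightsquigarrow\ZZ$ weakly in $\ell^\infty([0,1]^d)$, with the Hadamard-differentiable representation $\ZZ_n(\u)=\alpha_n(\u)-\sum_j C_j(\u)\alpha_{n,j}(u_j)+o_p(1)$. The linear representation in (C3), $\sqrt{n}(\wh\theta-\theta_0)=n^{-1/2}\sum_i\psi(\X_i)-n^{1/2}\int\psi\,dH+o_p(1)$, expresses the estimator error as a linear functional of the underlying empirical measure. Joint weak convergence of $(\alpha_n,\sqrt{n}(\wh\theta-\theta_0))$ is then immediate by the multivariate CLT for empirical processes (the finite-dimensional CLT combined with the asymptotic tightness of $\alpha_n$), and the continuous mapping theorem applied to the map $(\alpha,\xi)\mapsto\alpha-\sum_j C_j\alpha_j-\dot C_{\theta_0}^\top\xi$ yields weak convergence of $\YY_n$ to a centered Gaussian process $\YY$ in $\ell^\infty([0,1]^d)$.

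For the bootstrap part, I would run the same argument conditional on $(\X_1,\ldots,\X_n)$. Writing
\[
\sqrt{n}(C_{\wh\theta^*}-C_{\wh\theta})(\u)=\dot C_{\theta_0}(\u)^\top\sqrt{n}(\wh\theta^*-\wh\theta)+R_n^*(\u),
\]
the remainder $R_n^*$ is handled via (C4) exactly as above, using $\|\wh\theta^*-\wh\theta\|_2=O_{p^*}(n^{-1/2})$ from (C3) and the continuity of $\dot C_\theta$ at $\theta_0$ together with the consistency $\wh\theta\inprobto\theta_0$. The standard bootstrap weak-convergence results for the empirical copula process (e.g.\ Fermanian et al.\ (2004) and B\"ucher and Dette (2010)) give $\ZZ_n^*\rightsquigarrow\ZZ$ weakly in $\ell^\infty([0,1]^d)$ in probability, and the second part of (C3) provides the analogous linear representation for $\sqrt{n}(\wh\theta^*-\wh\theta)$ in terms of the bootstrap empirical measure. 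Joint conditional weak convergence of $(\alpha_n^*,\sqrt{n}(\wh\theta^*-\wh\theta))$ to the same Gaussian limit as $(\alpha_n,\sqrt{n}(\wh\theta-\theta_0))$ then follows by the multiplier/bootstrap CLT, and another application of the continuous mapping theorem yields weak convergence in probability of $\YY_n^*$ to the same limit $\YY$.

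I expect the main obstacle to be the uniform control of the Taylor remainder $R_n$ (and its bootstrap analog $R_n^*$). The assumption (C4) gives only a local H\"older bound, so I must first verify that $\wh\theta$ (respectively $\wh\theta^*$) lies in the prescribed neighborhood of $\theta_0$ with probability tending to one, which is why $\|\wh\theta-\theta_0\|_2=O_p(n^{-1/2})$ matters here, and why we need $\nu>0$ so that $n^{1/2}\|\wh\theta-\theta_0\|_2^{1+\nu}=o_p(1)$. Once this is secured, the remaining ingredients are textbook empirical-process and bootstrap tools, and the fact that $\dot C_{\theta_0}$ is of bounded variation in (C4) ensures that the linear perturbation $\u\mapsto\dot C_{\theta_0}(\u)^\top\xi$ is continuous for each $\xi$, validating the continuous mapping step in $\ell^\infty([0,1]^d)$.
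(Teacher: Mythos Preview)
Your proposal is correct and follows essentially the same route as the paper: a Taylor expansion of $\theta\mapsto C_\theta$ around $\theta_0$, control of the remainder via (C4) and the $O_p(n^{-1/2})$ rate from (C3), and then weak convergence of the resulting linear functional of the empirical process. The only packaging difference is in the bootstrap half: the paper simply invokes Theorem~\ref{CoreThParam} (with $L_n=L$ fixed) together with the first part and a triangle inequality in the bounded Lipschitz metric to conclude that $\YY_n^*$ has the same weak limit, whereas you rerun the conditional argument from scratch; both lead to the same conclusion with the same ingredients.
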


\section{Applications and Numerical Studies}

 \setcounter{equation}{0}

We present  a limited numerical study,
 serving as a
proof of principle rather than the final word on this subject. The evaluation of GOF tests in copula
settings is a complex problem and only partial answers can be found in literature: see the surveys of  Berg (2009), Genest et al. (2009) and, more recently, Fermanian (2012). Here, we restrict ourselves to the bivariate case. A full-scale numerical analysis is beyond the scope of this paper.

\mds

We have implemented $\widetilde{\TT}_n$, a computationally simpler version of $\TT_n$, see Appendix C for the algorithm.
In the case of a composite null hypothesis, we have implemented a simplified version of $\hat{\TT}_n$ in the same way, by restricting the boxes $B$ to be of the form $B=\prod_{i=1}^d (a_i, b_i]$ with $a_i, b_i \in  \{ n^{-1/d}, 2n^{-1/d}, \ldots\} \subset [0,1]$.
Since the distance  between $\TT_n$ and $\widetilde{\TT}_n$ tends to zero  in probability (as a result of Lemma 9 and Proposition 10 in    Section 5), the weak convergence results are valid with $\widetilde{\TT}_n$ instead of $\TT_n$ or $\hat{\TT}_n$. 
Moreover, the reasoning to approximate p-values by bootstrap still applies.\\

\subsection{Heuristics}
For two copula  densities $c_{0}$ and $c_{1}$, we define the
\textit{difference} sets $A^{+}$ and $A^{-}$ as
\begin{equation*}
A^{+}=\{(s,t):c_{0}(s,t)>c_{1}(s,t)\},\;\text{ and }A^{-}=
\{(s,t):c_{0}(s,t)<c_{1}(s,t)\}.
\end{equation*}
The proposed test statistics
are designed to sample $L_n$ boxes in order to maximize
the difference between the ``true'' and postulated copulas. In
situations where the geometry of the difference sets $A^{+}$ and
$A^{-}$ is complex,
statistics such as $\widetilde\TT_{n}$ can ``pick out"  disjoint subregions of $A^+$ and $A^-$,
and one could expect superior performance
consequently.
However,
sometimes just a single  well placed box can pick essentially all the mass of
sets $A^{+}$ or $A^{-}$, while the remaining $L_n-1$ boxes are just
collecting  noise and consequently  diminish the power of the
statistic $\TT_{n}$.\\

\begin{figure}[!!h]
\centering
\includegraphics*[width=12truecm,height=8truecm]{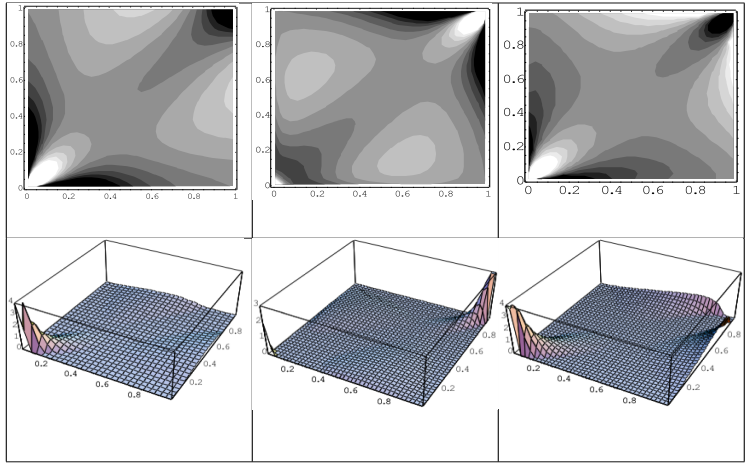}
\caption{\small {\bf Common comparisons}.
Copula density differences, through contour plots and 3D plots of synthetic data: Clayton - Frank (left), Gumbel - Frank (center), Clayton - Gumbel (right). Their Kendall's tau is $0.4$.
} \label{figure1}
\end{figure}

Most common scenarios encountered in the literature compare Frank,
Clayton, Gumbel, and Gauss copulas with each other, after
controlling for some dependence indicator (typically Kendall's tau):
see, for instance, Berg (2009), Genest and R\'emillard (2008) and
Genest et al. (2009). However, all these pairings  produce trivial
difference sets $A^{+}$ and $A^{-}$, as revealed in the contour
plots and 3D plots of  $c_0-c_1$ of Figure 1. We see that nearly all
the mass difference between copula densities $c_{0}$ and $c_{1}$ is
concentrated in a single spot, located in either the lower left or
upper right corner. Here  Kendall's $\tau=0.4$, but we observed
similar plots for  different values of $\tau$. Therefore, these
common simulation scenarios are tailored towards  many standard GOF
tests such as KS and CvM tests.
We are not aware of any argument that justifies such
specific types of pairing, except for  analytical tractability.
Figures \ref{figure2} and \ref{figure3}, however,   paint a
very different scenario  with more elaborate difference sets
$A^{+}$ and $A^{-}$ that appear in real life situations.
How often and to
what extent this complex situation is encountered in reality is largely an open empirical issue.\\

In this study, the  copula densities $c_1$ were estimated by kernel density estimators 
based on the following data:
\begin{itemize}
\item The bivariate \emph{ARCH}-like process $(X_{1},Y_{1}),\ldots,(X_n,Y_n)$, with $n=10^{6}$, was generated as follows:
First, we created independent $Z_i\sim N(0,1)$ and  $W_{i}=Z_{i}  ( 1+0.6W_{i-1}^{2})^{1/2} ,$ with $W_0=0$.
Second, we set $(X_{i},Y_{i}):=(W_{100i},W_{100i+1})$, creating nearly independent couples (of strongly dependent observations). Such models are commonly used in empirical finance, for instance.
\item
The \emph{Mixture Copula}
data $(X_{1},Y_{1}),\ldots,(X_n,Y_n)$, with $n=10^{6}$, are generated from the
mixture $c_{1}(s,t)=\frac12 c_{F}(s,t)+\frac12 c_{F}(1-s,t)$ for the  Frank
copula  $c_{F}$ with Kendall's $ \tau=0.4$. Therefore, this copula has asymmetrical features, contrary to most copulas that are tested in the literature. Obviously, other asymmetrical copulas could be built, following Liebscher (2008) for instance.
\item
The \emph{Euro-Dollar} data $(X_{1},Y_{1}),\ldots,(X_n,Y_n)$, with $n=1800,$ are quoted currency exchange values. $X$ is the
daily percentage change of the  Euro against the  US dollar, while $Y$ corresponds to the
daily change of the  Canadian dollar against the  US dollar.
\item
The \emph{Silver-Gold} data $(X_{1},Y_{1}),\ldots,(X_n,Y_n)$, with $n=5000$, presents
the log ratio of the average daily price of silver and gold futures respectively.
For instance, $X_{i}=\log (S_{i+1}/S_{i})$ based on the   average price $S_{i}$ of
silver in US dollars on day $i$.
\end{itemize}
We compared \emph{Mixture copula} and \emph{ARCH} with the independence copula, for which $c_{0}(s,t)=1$ (see Figure 2).
In the case of real data (\emph{Euro-Dollar} and \emph{Silver-Gold}), we choose the  Frank copula density with parameters $\tau =2.6$ and $\tau
=3.4$, respectively, for $c_0$ (see Figure 3). The latter parameters were chosen after minimizing the (estimated) $L_{1}$-distance between $c_{0}$ and $c_{1}$.
The difference sets are easily depicted by dark and bright sections of the contour plots, and the
3D plots clearly indicate that the mass difference between copula densities $c_0$ and $c_1$ is not concentrated in a single spot. 

\begin{figure}[!!h]
\centering
\includegraphics*[width=12truecm,height=8truecm]{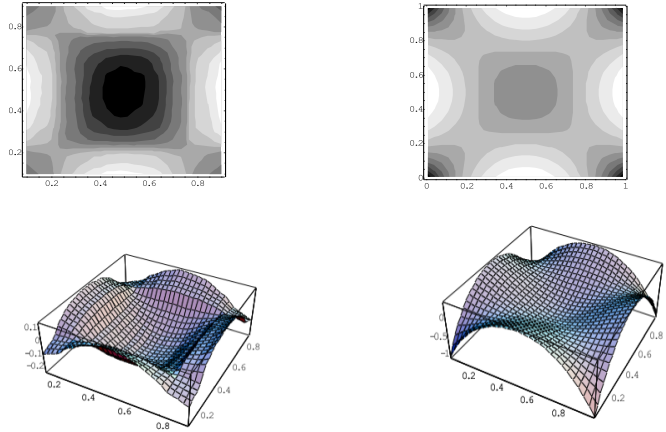}
\caption{\small {\bf Complex relation (synthetic data)}. Copula density differences, through contour plots and 3D plots: ARCH (left) and mixture copula (right), compared to the independence copula.
} \label{figure2}

\end{figure}

\begin{figure}[!!h]
\centering
\includegraphics*[width=12truecm,height=8truecm]{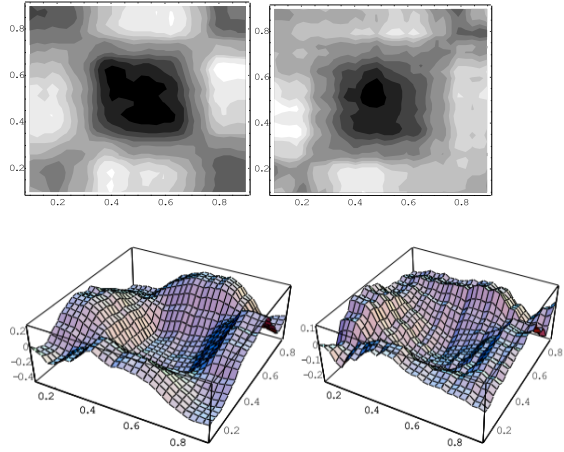}
\caption{\small {\bf Complex relation (actual data)}.
Copula density differences, through contour plots and 3D plots: Euro - Dollar (left) and Silver - Gold (right), compared to Frank copulas (with Kendall's tau equal to $2.6$ and $3.4$ respectively).
} \label{figure3}
\end{figure}

\subsection{GOF tests in practice}

We generated the data sets \emph{ARCH} and \emph{Mixture Copula} as described above.
For each data set, we run two sets of simulations:
\begin{itemize}
\item (ARCH-S and Mixture-S) Test the
simple null hypothesis $C_{0}(s,t)=st$ using the methodology of Section 2.
\item
 (ARCH-C and Mixture-C) Test the composite null
hypothesis that $C_0$ is a Frank copula using the  procedure
 described in Section  3.
 \end{itemize}
In both cases, the null hypothesis is wrong and should be rejected.\\

In our simulations, the number of boxes is $L_n=\left\lfloor \ln^{0.95}(n)\right\rfloor -2.$
 We approximated the  p-values of  all the statistics we consider   via
the  bootstrap procedures introduced in  sections 2 and 3. For each
approximation, we used 1,000 bootstrap samples. For the second set
of simulations (ARCH-C and Mixture-C), we
  computed the parameters $\widehat{\theta }$ and $\widehat{\theta }^{\ast }$ by the usual
pseudo-maximum likelihood procedure. Each procedure is repeated 100
times. We report  the percentage of times that the computed
$p$-value is below  $\alpha =0.05$.\\

Our limited numerical study confirms the above
assessment.
Table 1 shows that the ATV test outperforms largely the  KS and CvM tests in the case of complex pairing, while Table 2 confirms  that the  ATV
test is inferior in  case of the  commonly used  pairings of Figure 1.\\
In Table 2, for each pair of
copulas, say Clayton - Frank, we generated  $n$ observations  from the first copula (Clayton), and we tested the null hypothesis that the second
copula (Frank) is the true underlying copula. 
In this simple scenario,
the sophistication of $\TT_n$ is a disadvantage compared to  simpler usual test statistics. The former test looks for
discrepancies everywhere in the
unit hypercube (at the price of noise), while  the simpler KS and CvM tests  pick up easily the right boxes (by chance, in our opinion).\\
Table 3 shows that  the significance level of the ATV test is below $0.05$. The
data were simulated from the
null hypothesis.
In all tables,
Kendall's  $\tau=0.4$.\\

{

\begin{table}
\begin{equation*}
\begin{array}{cccccc}
\text{type} & n & \text{ARCH-S } & \text{ARCH-C} & \text{Mixture-S } & \text{
Mixture-C} \\
\text{ATV} & 400 & \mathbf{75\%} & \mathbf{80\%} & \mathbf{41\%} & \mathbf{25\%} \\
\text{KS} & 400 & 6\% & 4\% & 8\% & 12\% \\
\text{CvM} & 400 & 25\% & 50\% & 6\% & 15\% \\
\text{ATV} & 800 & \mathbf{100\%} & \mathbf{99\%} & \mathbf{94\%} & \mathbf{98\%}
\\
\text{KS} & 800 & 32\% & 50\% & 20\% & 25\% \\
\text{CvM} & 800 & 50\% & 92\% & 31\% & 84\%
\end{array}
\end{equation*}
\caption{{\bf Complex pairing}, related to Figure 2: relative frequencies of rejected null hypotheses under $\alpha=0.05$.}
\end{table}

\begin{table}\begin{equation*}
\begin{array}{ccccc}
\text{type} & n & \text{Clayton - Frank} & \text{Gumbel - Frank} & \text{
Clayton - Gumbel} \\
\text{ATV} & 400 & 42\% & 26\% & 88\% \\
\text{KS} & 400 & 58\% & 25\% & 90\% \\
\text{CvM} & 400 & \mathbf{84\%} & \mathbf{47\%} & \mathbf{95\%} \\
\text{ATV} & 800 & 92\% & 58\% & 94\% \\
\text{KS} & 800 & 98\% & 53\% & 98\% \\
\text{CvM} & 800 & \mathbf{100\%} & \mathbf{73\%} & \mathbf{100\%}
\end{array}
\end{equation*}
 \caption{{\bf Trivial pairing}, related to Figure 1:
relative frequencies of rejected null
  hypotheses under $\alpha =0.05.$ }
\end{table}

\begin{table}
\begin{equation*}
\begin{array}{ccccc}
\text{type} & n & \text{Clayton - Clayton} & \text{Gumbel - Gumbel} & \text{
Frank -Frank} \\
\text{ATV} & 400 & 3\% & 2\% & 2\% \\
\text{KS} & 400 & 4\% & 5\% & 4\% \\
\text{CvM} & 400 & 4\mathbf{\%} & 5\mathbf{\%} & 4\mathbf{\%} \\
\text{ATV} & 800 & 2\% & 4\% & 3\% \\
\text{KS} & 800 & 3\% & 3\% & 5\% \\
\text{CvM} & 800 & 5\mathbf{\%} & 3\mathbf{\%} & 6\mathbf{\%}
\end{array}
\end{equation*}
\caption{ {\bf Errors of the first kind}: relative frequencies of rejected null hypotheses under $\alpha=0.05$.}
\end{table}
}

\section{Proofs}
\label{myproofs}
\setcounter{equation}{0}

Throughout the proofs, we assume
without loss of generality that $F_j=I$ for every $j=1,\ldots,d$ (uniform marginal distributions). This implies that $H=C$.
This is justified by the following lemma.

 \begin{lemma}\label{uniform_trick}
Let $F_j$, $j=1,\ldots,d$ be continuous distribution functions. Denote by $\tilde H$ the cdf of $(F_1(X_1),\ldots, F_d(X_d))$ and by $\tilde C$ its associated copula.
The empirical copula associated to the sample $(F_1(X_{i1}),\ldots,F_d(X_{id}))$, ${i=1,\ldots,n}$, is denoted by $\tilde{\mathbb C}_n$.
We have $$ C(\u)=\tilde C(\u)= \tilde H(\u)\text{ for all }
\u \in[0,1]^d.$$ Moreover,
\[ {\mathbb C}_n\left(\frac{i_1}{n},\ldots,\frac{i_d}{n}\right)=
\tilde{\mathbb C}_n\left(\frac{i_1}{n},\ldots,\frac{i_d}{n}\right) \text{ for }
i_1,\ldots,i_d \in \{0,1,\ldots, n\}.\]
\end{lemma}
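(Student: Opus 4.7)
\textsc{Proof plan.} The argument splits naturally into the two claims.

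For the first claim, I would start from the continuity of the $F_j$: this ensures both that $F_j(X_j)$ is uniform on $[0,1]$ and that the equivalence $F_j(X_j)\le u_j \iff X_j\le F_j^{-}(u_j)$ holds for every $u_j\in[0,1]$ (using the standard probability-integral-transform identity $F_j(F_j^{-}(u_j))=u_j$). Then I would just compute directly
\[
\tilde H(\u)=\PP\bigl(F_1(X_1)\le u_1,\ldots,F_d(X_d)\le u_d\bigr)=\PP\bigl(X_1\le F_1^{-}(u_1),\ldots,X_d\le F_d^{-}(u_d)\bigr)=H(F_1^{-}(u_1),\ldots,F_d^{-}(u_d)),
\]
which equals $C(\u)$ by Sklar's definition recalled earlier. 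Since the marginals of $\tilde H$ are uniform, i.e.\ their inverses are the identity on $[0,1]$, the copula of $\tilde H$ coincides with $\tilde H$ itself, giving $\tilde C(\u)=\tilde H(\u)$ and therefore $C(\u)=\tilde C(\u)=\tilde H(\u)$.

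For the second claim, the key observation is that almost surely the samples contain no ties in any coordinate. Indeed, continuity of $H$ implies continuity of each $F_j$, hence $\PP(X_{k,j}=X_{\ell,j})=0$ for $k\ne\ell$; moreover any interval of constancy of $F_j$ carries zero $F_j$-mass, so almost surely no two $X_{k,j}$ fall in a common flat region of $F_j$. Consequently, almost surely,
\[
\mathbb F_{n,j}^{-}\!\left(\tfrac{i_j}{n}\right)=X_{(i_j),j}\text{ and }\tilde{\mathbb F}_{n,j}^{-}\!\left(\tfrac{i_j}{n}\right)=F_j\!\bigl(X_{(i_j),j}\bigr),
\]
where $X_{(i_j),j}$ denotes the $i_j$-th order statistic of the $j$-th marginal sample (with the natural convention at $i_j=0$, for which both sides equal $0$). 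Then I would simply note the pointwise equivalence, valid on the almost sure no-tie event,
\[
X_{k,j}\le X_{(i_j),j}\iff F_j(X_{k,j})\le F_j\!\bigl(X_{(i_j),j}\bigr),
\]
because $F_j$ is non-decreasing and strict inequality $F_j(X_{k,j})<F_j(X_{(i_j),j})$ already forces $X_{k,j}<X_{(i_j),j}$ on that event. Summing over $k$ and applying this for every coordinate $j$ gives
\[
\mathbb C_n\!\left(\tfrac{i_1}{n},\ldots,\tfrac{i_d}{n}\right)=\tfrac{1}{n}\sum_{k=1}^n\1\{X_{k,j}\le X_{(i_j),j},\ j=1,\ldots,d\}=\tilde{\mathbb C}_n\!\left(\tfrac{i_1}{n},\ldots,\tfrac{i_d}{n}\right).
\]

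The hard part, if there is one, is really bookkeeping rather than mathematics: I need to make sure that the possible flat regions of $F_j$ (which prevent $F_j$ from being a bijection) and the boundary cases $i_j\in\{0,n\}$ do not break either equivalence. Both are handled by the almost sure no-tie event combined with the definition $F_j^{-}(0)=\inf\supp(F_j)$, which does not affect the count of indices $k$ satisfying the defining inequalities of the empirical copula at grid points.
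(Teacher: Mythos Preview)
Your argument is correct and is precisely the direct verification that underlies the paper's own proof, which gives no details and simply refers to this as a straightforward extension of Lemma~1 in Fermanian et al.\ (2004). The only minor imprecision is that the equivalence $F_j(X_j)\le u_j\Leftrightarrow X_j\le F_j^{-}(u_j)$ can fail pointwise when $X_j$ lies in a flat region of $F_j$, but the two events agree almost surely under continuity, which is all your probability computation actually uses.
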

\begin{proof} This is a straightforward extension of Lemma 1 in Fermanian et al. (2004). \end{proof}

Since the letter $C$ is reserved for the copula function, we use the letters $K, K_0, K_1$, etc. in the sequel to denote  generic constants, and we
write  $\|\s\| _\infty = \max_{1\le j \le d} |s_j|$  of $\s=(s_1,\ldots,s_d)\in[0,1]^d$.
\\

\subsection{\sc Proof of preliminary results}

In general, note that, for each $f\in\F_n$ defined in (\ref{F:class}), we can write
\begin{eqnarray*}
\ZZ_n(f)=
\sum_{k=1}^{L_n} c_k \ZZ_n(B_k) =
\sum_{l=1}^{2^d L_n}
\sigma_l \ZZ_n(\s_l),
\end{eqnarray*}
and $$\ZZ_n^*(f)= \sum_{l=1}^{2^d L_n}
\sigma_l \ZZ_n^*(\s_l) ,$$
 for some  $\sigma_l \in\{-1,+1\}$  and  $\s_l \in [0,1]^d$, using formula (\ref{Z_n:box}).
Let $\alpha_n(\u) := \sqrt{n}(\HH_n-H)(\u)= \sqrt{n}(\HH_n(\u)-\u)$ be the ordinary uniform empirical
process in $[0,1]^d$, and let its oscillation modulus be  defined as
\begin{equation}
\label{M_n}
\MM_n(\delta):=  \sup\left\{ \left | \alpha_n(\s) -
\alpha_n(\s') \right|:\ \|\s-\s'\|_\infty\le \delta; \,\s,\s' \in [0,1]^d\right\},
\end{equation}
for any $\delta>0$.\\

\begin{lemma}\label{prop:oscillation}
Let $(\delta_n)_{n\geq 0}$ be a sequence of positive real numbers such that $n\delta_n/\log n\rightarrow \infty$.
Then, we have
\[ \MM_n(\delta_n)= O(\delta_n^{1/2} (\log n)^{1/2} )  \qquad \text{almost surely}.\]
\end{lemma}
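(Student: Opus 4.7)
By Lemma~\ref{uniform_trick} we may (and do) assume uniform marginals, so $H(\s)=\prod_{j=1}^d s_j$ on $[0,1]^d$. The statement is a multivariate version of Stute's modulus-of-continuity theorem and the plan is standard: combine Bernstein's inequality for a single increment with a fine grid discretization, and conclude by the Borel--Cantelli lemma. It suffices to show that for some constant $K$ large enough, the events $E_n = \{\MM_n(\delta_n) > K(\delta_n \log n)^{1/2}\}$ have $\sum_n \PP(E_n) < \infty$.

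\textbf{Single pair bound via Bernstein.} Fix $\s,\s'\in[0,1]^d$ with $\|\s-\s'\|_\infty \le \delta_n$. Each summand $Y_i := \1\{\X_i \le \s\} - \1\{\X_i \le \s'\} - \EE[\cdots]$ is centered, satisfies $|Y_i|\le 2$, and has variance bounded by the Lebesgue measure of the symmetric difference between the two lower orthants, which under uniform marginals is at most $2d\delta_n$. Writing $S_n = \sum_i Y_i = \sqrt{n}(\alpha_n(\s)-\alpha_n(\s'))$, Bernstein's inequality applied with $t=\eta \sqrt{n\delta_n \log n}$ yields
$$
\PP\bigl(|\alpha_n(\s)-\alpha_n(\s')| \ge \eta (\delta_n \log n)^{1/2}\bigr) \le 2\exp\!\left(-\frac{\eta^2 \log n}{4d + O(\sqrt{\log n / (n\delta_n)})}\right).
$$
The assumption $n\delta_n/\log n \to \infty$ drives the nuisance term in the denominator to $0$, so for $n$ large enough the bound is at most $2\,n^{-\eta^2/(5d)}$.

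\textbf{Discretization.} Introduce the canonical lattice $\G_n = \{0,1/n,2/n,\ldots,1\}^d$, of cardinality $(n+1)^d$. For any $\s\in[0,1]^d$ choose $\s_-,\s_+\in \G_n$ with $\s_- \le \s \le \s_+$ componentwise and $\|\s_+-\s_-\|_\infty \le 1/n$. Using the monotonicity of $\HH_n$ and the Lipschitz bound $|H(\s_+)-H(\s_-)| \le d/n$, a short computation gives
$$
|\alpha_n(\s)-\alpha_n(\s_\pm)| \le |\alpha_n(\s_+)-\alpha_n(\s_-)| + 2\sqrt{n}\, |H(\s_+)-H(\s_-)| \le |\alpha_n(\s_+)-\alpha_n(\s_-)| + 2d/\sqrt{n}.
$$
Consequently, for any $\s,\s'\in[0,1]^d$ with $\|\s-\s'\|_\infty \le \delta_n$ we may find grid points $\s_\pm,\s'_\pm\in \G_n$ at mutual $\ell^\infty$-distance at most $\delta_n + 2/n \le 2\delta_n$ (for $n$ large) such that $|\alpha_n(\s)-\alpha_n(\s')|$ is dominated, up to an additive $O(1/\sqrt{n})$, by the maximum of $|\alpha_n(\t)-\alpha_n(\t')|$ over such grid pairs.

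\textbf{Union bound and Borel--Cantelli.} The number of pairs $(\t,\t')\in \G_n\times\G_n$ with $\|\t-\t'\|_\infty \le 2\delta_n$ is at most $(n+1)^d (2n\delta_n+1)^d \le C\, n^{2d}$. Applying the Bernstein bound of the first step to each such pair (with $\delta_n$ replaced by $2\delta_n$, which only changes constants), the union bound gives
$$
\PP(E_n) \le C\, n^{2d} \cdot 2\,n^{-\eta^2/(5d)} + \mathbf{1}\{2d/\sqrt{n} \ge \tfrac12 K(\delta_n \log n)^{1/2}\},
$$
and the indicator is $0$ eventually since $n\delta_n/\log n\to\infty$. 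Choosing $\eta$ (and hence $K=2\eta$) large enough that $\eta^2/(5d) > 2d+2$ makes $\sum_n \PP(E_n)<\infty$, and Borel--Cantelli yields the almost sure bound $\MM_n(\delta_n) \le K(\delta_n \log n)^{1/2}$ eventually, as claimed. The only delicate point is the grid-to-continuum passage, which is resolved by taking a grid finer than $\delta_n$ (here mesh $1/n$) so that the deterministic bias $\sqrt{n}/n$ is negligible against $(\delta_n \log n)^{1/2}$; everything else is a routine application of Bernstein's inequality.
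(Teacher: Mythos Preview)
Your argument is correct in structure, but there is one slip: assuming uniform marginals gives $H=C$, \emph{not} $H(\s)=\prod_j s_j$ (that would force the independence copula). Fortunately the only places you use this are the variance bound and the Lipschitz estimate $|H(\s_+)-H(\s_-)|\le d/n$; both hold for any copula since $|C(\u)-C(\v)|\le \sum_j|u_j-v_j|$, so the variance of your $Y_i$ is bounded by $\PP(\{\X\le\s\}\triangle\{\X\le\s'\})\le C(\s\vee\s')-C(\s\wedge\s')\le d\delta_n$. With that correction the proof goes through.

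Your route differs from the paper's. The paper invokes a ready-made exponential inequality for the modulus $\MM_n$ itself (Proposition~A.1 of Segers, 2012, reproduced as Proposition~\ref{segers} in the appendix), which already has the supremum built in; a direct application with $\lambda_n=K_0(\delta_n\log n)^{1/2}$ and Borel--Cantelli finishes in a few lines. You instead rebuild such an inequality from first principles via Bernstein on single increments plus a grid discretization and union bound. The paper's approach is shorter and cleaner; yours is self-contained and does not rely on the Segers result. Both reach the same rate via the same Borel--Cantelli endgame.
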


\begin{proof}
We apply
Proposition \ref{segers}    with $\lambda_n=K_0 \delta^{1/2}_n (\log n)^{1/2}$ for some constant $K_0>0$.
Since $n^{-1/2}\lambda_n / \delta_n = K_0\left(\log n/(n\delta_n)\right)^{1/2}$ tends to zero, this inequality can be rewritten
$$ \PP\left\{ \MM_n(\delta_n) > \lambda_n \right\} \leq \frac{K_1}{\delta_n} \exp\left( -\frac{K_2\psi(1)\lambda^2_n}{\delta_n}   \right) = K_1 n \exp\left( - K_2 K_0^2 \psi(1) \log n \right),$$
for some constants $K_1$, $K_2$ and $n$ sufficiently large. When $K_0$ is sufficiently large, we check that
$$ \PP\left\{  \MM_n(\delta_n) > \lambda_n \right\}\leq \frac{K_3}{n^2},$$ for some constant $K_3$. Invoke the
 Borel-Cantelli Lemma  to  conclude the proof.
\end{proof}

In addition, let $\alpha_{n,j}(u)= \sqrt{n} (\FF_{n,j}-F_j)(u) = \sqrt{n} (\FF_{n,j}(u)-u)$ be the ordinary uniform (marginal) empirical process in $[0,1]$, and
we define
\begin{equation}
\widetilde \ZZ_n(\s)=\alpha_n(\s)-\sum_{j=1}^d C_j(\s)\alpha_{n,j}(s_j). \label{ZZtilde}
\end{equation}

\begin{prop}\label{prop:Z_n}
Under conditions (C1) and (C2), we have
$$\lim_{n\to\infty}  \sup_{h\in BL_1}  \left|  \EE [ h(\mathbb{Z}_n)]  - \EE[ h( \widetilde \ZZ_n)]  \right| =0.$$
\end{prop}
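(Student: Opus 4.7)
The plan is to reduce the bounded-Lipschitz distance to a supremum in $\ell^\infty(\F_n)$, then reduce that supremum over $\F_n$ to a sup-norm statement on $[0,1]^d$, and finally exhibit a strong-approximation rate that beats the mild growth of $L_n$.

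First, for every $h\in BL_1$, the Lipschitz and boundedness conditions \eqref{BL1}--\eqref{BL2} give $|h(\ZZ_n)-h(\widetilde\ZZ_n)|\le \min\{2,\sup_{f\in\F_n}|\ZZ_n(f)-\widetilde\ZZ_n(f)|\}$. Taking expectation, then the supremum over $h$, and invoking dominated convergence, it suffices to prove
$$R_n \;:=\; \sup_{f\in\F_n}\bigl|\ZZ_n(f)-\widetilde\ZZ_n(f)\bigr| \;\longrightarrow\; 0 \quad\text{in probability.}$$

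Second, every $f\in\F_n$ expands via \eqref{Z_n:box} as $\ZZ_n(f)=\sum_{l=1}^{2^d L_n}\sigma_l\,\ZZ_n(\s_l)$ for signs $\sigma_l\in\{-1,+1\}$ and corners $\s_l\in[0,1]^d$, with the same identity holding for $\widetilde\ZZ_n(f)$. A triangle inequality then gives
$$R_n \;\le\; 2^d L_n\,\Delta_n, \qquad \Delta_n := \sup_{\s\in[0,1]^d}\bigl|\ZZ_n(\s)-\widetilde\ZZ_n(\s)\bigr|.$$
Under (C2), $L_n=O((\log n)^\gamma)$ with $\gamma<1$, so the task is reduced to showing $\Delta_n$ vanishes faster than $(\log n)^{-\gamma}$; any polynomial-in-$n$ rate suffices.

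Third, by Lemma \ref{uniform_trick} assume uniform margins, so $H=C$ and
$$\ZZ_n(\u)=\alpha_n\bigl(\FF_{n,1}^{-}(u_1),\ldots,\FF_{n,d}^{-}(u_d)\bigr)+\sqrt{n}\bigl\{C(\FF_{n,1}^{-}(u_1),\ldots,\FF_{n,d}^{-}(u_d))-C(\u)\bigr\}.$$
For the first summand, the classical rate $\|\FF_{n,j}^{-}-I\|_\infty=O(n^{-1/2}(\log\log n)^{1/2})$ almost surely combined with Lemma \ref{prop:oscillation} applied at $\delta_n=O(n^{-1/2}(\log\log n)^{1/2})$ yields $\|\alpha_n(\FF_{n,1}^{-},\ldots)-\alpha_n(\cdot)\|_\infty=O(n^{-1/4}(\log n)^{1/2})$ a.s. For the second summand, a first-order Taylor expansion of $C$ at $\u$ together with the Bahadur--Kiefer identity $\sqrt{n}(\FF_{n,j}^{-}(u)-u)=-\alpha_{n,j}(u)+o(1)$ produces the linearization $-\sum_{j=1}^d C_j(\u)\alpha_{n,j}(u_j)$ plus a remainder. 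The Hölder-with-weights estimate in (C1), applied with exponent $r>0$, bounds this remainder uniformly on any bulk region $[n^{-a},1-n^{-a}]^d$ at rate $O(n^{-(1+r)/2+a(1+2\beta)}(\log\log n)^{(1+r)/2})$, which is $O(n^{-\kappa})$ for a suitable small $a>0$ and some $\kappa>0$. Boundary strips of width $n^{-a}$ contribute negligibly because the bounded-variation clause of (C1) lets us integrate the marginal fluctuations $\alpha_{n,j}$ against $C_j$ there. Combining the two pieces, $\Delta_n=O(n^{-\kappa})$ almost surely, which easily beats any polylog factor, and together with the bound in Step 2 yields $R_n\to 0$ a.s., completing the proof.

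The main obstacle is the boundary behavior of $C_j$: the permitted singularity $u_j^{-\beta}(1-u_j)^{-\beta}$ in (C1) must be prevented from spoiling the polynomial-in-$n$ rate demanded by Step 2, which forces one to split $[0,1]^d$ into a bulk region and a shrinking boundary strip and to handle the two cases by different means. This is precisely why (C1) is stated with explicit boundary weights and a bounded-variation requirement rather than mere continuity of the partial derivatives.
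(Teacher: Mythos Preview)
Your overall strategy matches the paper's almost exactly: the same reduction from the bounded-Lipschitz distance to $\sup_{f\in\F_n}|\ZZ_n(f)-\widetilde\ZZ_n(f)|$, the same bound $2^d L_n\sup_{\s}|\ZZ_n(\s)-\widetilde\ZZ_n(\s)|$, the same split into an oscillation-modulus piece (handled by Lemma~\ref{prop:oscillation}) and a Taylor-plus-Bahadur--Kiefer piece, and the same bulk/boundary decomposition for the remaining term $\sum_j[C_j(\s_n)-C_j(\s)]\alpha_{n,j}(s_j)$.

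The gap is in your boundary argument. You claim the strip is negligible ``because the bounded-variation clause of (C1) lets us integrate the marginal fluctuations $\alpha_{n,j}$ against $C_j$ there.'' But at this stage you are bounding a pointwise supremum over $\s$, not a sum over a partition, and there is nothing to integrate by parts; bounded variation of $C_j$ controls $\sum_k|C_j(B_k)|$ over disjoint boxes and says nothing about $|C_j(\s_n)-C_j(\s)|$ at a single $\s$ near the boundary. (That clause of (C1) is used elsewhere, in the envelope estimate for Theorem~\ref{Thm1}.) What actually controls the boundary is that $\alpha_{n,j}(s_j)$ is itself small when $s_j$ is close to $0$ or $1$: the paper invokes Mason's (1981) weighted bound
\[
\sup_{0<s<1}\frac{|\alpha_{n,j}(s)|}{\sqrt{s(1-s)}}=O\bigl((\log n)^{1/2}\log\log n\bigr)\quad\text{a.s.},
\]
which on a strip of width $\eps_n$ gives $|\alpha_{n,j}(s_j)|\le K\eps_n^{1/2}(\log n)^{1/2}\log\log n$, and pairs it with the trivial bound $|C_j(\s_n)-C_j(\s)|\le 2$. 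The same weighted bound is also what makes the bulk estimate go through when $\beta\ge 1/2$, since the uniform Chung--Smirnov bound on $\alpha_{n,j}$ alone would leave an uncontrolled factor $s_j^{-\beta}(1-s_j)^{-\beta}$; your stated bulk rate $O(n^{-(1+r)/2+a(1+2\beta)})$ does not match either decomposition and appears to be off, though once Mason's bound is in place the qualitative conclusion that some choice $\eps_n=n^{-p}$ yields a polynomial rate is correct.
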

\begin{proof}
First, we observe that
\begin{eqnarray*}
 \sup_{h\in BL_1} \left|  \EE [ h(\ZZ_n) - h(\widetilde\ZZ_n) ] \right|
&\le& \delta+ 2\PP\left\{ \sup_{f\in\F_n}  | \ZZ_n(f)-\widetilde \ZZ_n(f) | > \delta \right\}.
\end{eqnarray*}
The latter inequality holds for any $\delta>0$, and uses the fact
that $|h|$ is bounded by 1 and has Lipschitz constant 1. It
remains to show that
$$ \sup_{f\in\F_n}  | \ZZ_n(f)-\widetilde \ZZ_n(f) | \to 0,$$
in probability, as $n\to\infty$.
 The remainder of the proof generalizes  Proposition 4.2 of Segers (2012).
Now,
we note that
\begin{eqnarray*}
 \sup_{f\in\F_n} | \ZZ_n (f)- \widetilde \ZZ_n (f) | \le 2^d L_n \sup_{ \s \in [0,1]^d } | \ZZ_n(\s) - \widetilde \ZZ_n(\s) |\le
2^d L_n
(I + II)
\end{eqnarray*}
with
\begin{eqnarray*}
I &=&  \sup_{\s\in [0,1]^d}  \left| \alpha_n(\FF_{n,1}^{-}s_1,\ldots,\FF_{n,d}^{-}s_d) -\alpha_n(\s) \right|,
\\
II&=&  \sup_{\s \in [0,1]^d} \left|  \sqrt{n} \left[
C(\FF_{n,1}^{-}s_1,\ldots,\FF_{n,d}^{-}s_d) -C(\s) \right] + \sum_{j=1}^d C_j(\s)\alpha_{n,j}(s_j) \right|.
\end{eqnarray*}
The first term, $I$, can be bounded as follows. Set $\beta_{n,j}(s)= \sqrt{n} (\FF_{n,j}^{-} s- s)$, $j=1,\ldots,d$. By the Chung-Smirnov LIL, we have
$$\max_{1\le j\le d} \sup_{0\le s\le 1} | \beta_{n,j}(s) |  = O((\log\log n)^{1/2}) \qquad \text{ almost surely}.$$
Using Lemma~\ref{prop:oscillation} with $\delta= n^{-1/2} (\log\log n)^{1/2}$, we get
\[ \sup_{\|\s-\s'\|_\infty < \delta} \left | \alpha_n(\s) - \alpha_n(\s') \right| = O(n^{-1/4} (\log n)^{1/2} (\log \log
n)^{1/4}),
\]
almost surely.
This implies that $I= O(n^{-1/4} (\log n)^{1/2} (\log \log n)^{1/4})$, almost surely.

\medskip

For the second term, we get by the mean value theorem that
\begin{eqnarray*}
II &=&  \sup_{\s\in [0,1]^d } \left|  \sqrt{n}[ C(\FF_{n,1}^{-}s_1,\ldots,\FF_{n,d}^{-}s_d )-C(\s) ]
  + \sum_{j=1}^d C_j(\s)\alpha_{n,j}(s_j) \right|\\
 &\leq &  \sup_{\s\in [0,1]^d } \left|   \sum_{j=1}^d C_j(\s_n) \beta_{nj}(s_j)    + \sum_{j=1}^d C_j(\s)\alpha_{n,j}(s_j) \right|,
\end{eqnarray*}
where $\s_n$ is a vector in $[0,1]^d$ s.t. $\| \s_n- \s\| _\infty\le n^{-1/2}\max_{1\le j\le d}  |\beta_{n,j}(s_j)|$.
Since $|C_j|\le 1$ for every $j=1,\ldots,d$ (because copulas are Lipschitz with Lipschitz constant 1), we deduce
\begin{eqnarray*}
II &\leq &  \sup_{\s\in [0,1]^d } \sum_{j=1}^d \left| \beta_{nj}(s_j)+ \alpha_{n,j}(s_j)\right|
 +  \sup_{\s\in [0,1]^d }\sum_{j=1}^d \left| [C_j(\s_n) - C_j(\s)]\alpha_{n,j}(s_j) \right| \\
& \leq & IIa + IIb.
\end{eqnarray*}
The Bahadur-Kiefer theorem  (Shorack and  Wellner, 2009, p. 585) states that
 \[\max_{1\le j\le d} \sup_{0\le s\le 1}  | \beta_{n,j}(s) + \alpha_{n,j}(s_j)   | = O(n^{-1/4} (\log n)^{1/2} (\log \log n)^{1/4}) \quad \text{almost surely}.
 \]
Then, $IIa = O(n^{-1/4} (\log n)^{1/2} (\log \log n)^{1/4}) $ almost surely.

\medskip

Concerning $IIb$, we consider a positive sequence $(\eps_n)$, $\eps_n\rightarrow 0$, that will be specified later independently of any $\s=(s_1,\ldots,s_d)\in [0,1]^d$.
For any index $j=1,\ldots,d$ and any $\s\in [0,1]^d$, we will distinguish the two cases: $s_j\in [\eps_n,1-\eps_n]$ and the opposite.

\medskip

If $s_j\in [\eps_n,1-\eps_n]$ then
\[ s_{nj}= s_j\left( 1+\frac{s_{nj}-s_j}{s_j} \right) \ge s_j \left( 1- \frac{|s_{nj}-s_j|}{\eps_n} \right)\ge
\frac{s_j}{2},
\]
and
\[ 1-s_{nj} \ge (1-s_j) \left( 1- \frac{|s_{nj}-s_j|}{\eps_n} \right)\ge \frac{1-s_j}{2},
\]  almost surely and for $n$ sufficiently large, for all $\eps_n\to0$ and  $n\eps_n^2 / \log n \to \infty$.
Corollary  2 in Mason (1981) implies that
$$\max_{1\le j\le d} \sup_{0\le s_j \le 1}  |s_j^{-1/2}(1-s_j)^{-1/2}\alpha_{n,j}(s_j) | \leq K(\log n)^{1/2}\log\log n, $$
almost surely, for some constant $K>0$.




In this case, using condition (C1), we deduce,
\begin{eqnarray*}
\left| C_j(\s_n) - C_j(\s)\right|   | \alpha_{n,j}(s_j)| &\leq& K_0 \| \s_n -\s \|^r \left\{s_j^{-\beta} (1-s_j)^{-\beta} + s_{nj}^{-\beta} (1-s_{nj})^{-\beta}
\right\} | \alpha_{n,j}(s_j)| \\
&\leq &  K_1 \| \s_n -\s \|^r s_j^{1/2-\beta} (1-s_j)^{1/2-\beta}(\log n)^{1/2}\log\log n \\
&\leq &  K_2 n^{-r/2}(\log\log n)^{r/2} \max(\eps_n^{1/2-\beta},1) (\log n)^{1/2}\log\log n,
\end{eqnarray*}
almost surely, for some constants $K_0,K_1,K_2>0$ and every $j$.

\medskip

If $s_j\not\in [\eps_n,1-\eps_n]$ then
\begin{eqnarray*}
\left| C_j(\s_n) - C_j(\s)\right| | \alpha_{n,j}(s_j)| &\leq &
2 |\alpha_{n,j}(s_j) | \\
&\le &  2\eps_n^{1/2}  s_j^{-1/2} (1-s_j)^{-1/2} |\alpha_{n,j}(s_j) | \\
&\leq & K \eps_n^{1/2} (\log n)^{1/2}\log \log n \ \text{ almost surely,}
\end{eqnarray*} see Corollary       2 in Mason (1981).

\medskip

Combining all these bounds
entails then
$$ IIb \leq K_3
\left[ n^{-r/2}(\log\log n)^{r/2} \max(\eps_n^{1/2-\beta},1) +  \eps_n^{1/2}  \right] (\log n)^{1/2}\log \log n  ,$$
with $K_3>0$.
We now specify the choice of $\eps_n=n^{-p}$, with $p$ depending on $\beta$ and $r$ only.
If   $2\beta > 2r+1$,  we take $0< p < r/(2\beta - 1)$. If $\beta < 1/2$, set $p=1/4$.
Otherwise, take $p=\min(1/4,r/(4\beta - 2))$, for instance.
In each case, these choices ensure that  $IIb = O( n^{-q} )$ almost surely, for some $q>0$.

\medskip

Since $L_n=O(\log n)$ by assumption (C2), we obtain $L_n ( I+II) \to0$ almost surely, as $n\to\infty$, and  the proof is
complete.
\end{proof}

Next, we turn our attention  to the bootstrap counterparts. We define $\alpha_n^*(\s)= \sqrt{n}(\HH_n^*-\HH_n)(\s)$ as  the ordinary bootstrap  empirical process in $[0,1]^d$.
We prove the following exponential inequality for the oscillation modulus
\[ \MM_n^*(\delta) = \sup_{\|\s-\s'\|_\infty<\delta}  |\alpha_n^*(\s)-\alpha_n^*(\s')|.\]

\begin{lemma} \label{lemma:alpha_n^*}
For all bounded sequences $\delta_n$ such that $n\delta_n/ \log(n) \to\infty$ as $n\to\infty$,
\begin{eqnarray}\label{M_n^*}
 \MM_n^*(\delta_n)= O(\delta_n^{1/2} (\log n)^{1/2} )\ \quad \text{almost surely.}\end{eqnarray}
\end{lemma}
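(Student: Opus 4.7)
The proof mirrors that of Lemma~\ref{prop:oscillation}, but works conditionally on the original sample $(\X_{1},\ldots,\X_{n})$. Given the data, $(\X_{1}^{*},\ldots,\X_{n}^{*})$ is an i.i.d. sample from the discrete distribution $\HH_{n}$, so $\alpha_{n}^{*}(\s)=\sqrt{n}(\HH_{n}^{*}-\HH_{n})(\s)$ is a classical centered empirical process in $[0,1]^{d}$ based on this conditional law. My plan is to establish a conditional analog of Proposition~\ref{segers} for $\alpha_{n}^{*}$, and then conclude by a Fubini/Borel--Cantelli argument exactly as in Lemma~\ref{prop:oscillation}.

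First I would control increments conditionally. For $\s,\s'\in[0,1]^{d}$ with $\|\s-\s'\|_{\infty}<\delta$, the difference $\alpha_{n}^{*}(\s)-\alpha_{n}^{*}(\s')$ is a normalized sum of centered i.i.d. bounded random variables whose conditional variance is at most $\HH_{n}(R_{\s,\s'})$, where $R_{\s,\s'}$ is the symmetric difference of the lower orthants at $\s$ and $\s'$. The collection of all such sets forms a VC class and $H$ is continuous, so by Glivenko--Cantelli we have $\HH_{n}(R_{\s,\s'})\le K\delta$ uniformly in $\s,\s'$ with $\|\s-\s'\|_{\infty}<\delta$, almost surely for all large $n$ (and for all $\delta$ of interest). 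A conditional Bernstein estimate, followed by exactly the dyadic chaining used in the proof of Proposition~\ref{segers}, then yields
\[
P^{*}\!\left\{\MM_{n}^{*}(\delta)>\lambda\right\} \;\le\; \frac{K_{1}}{\delta}\exp\!\left(-\frac{K_{2}\lambda^{2}}{\delta}\right)
\]
almost surely, whenever $n^{-1/2}\lambda/\delta\to 0$, with constants $K_{1},K_{2}$ that do not depend on the sample.

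Setting $\lambda_{n}=K_{0}\delta_{n}^{1/2}(\log n)^{1/2}$ with $K_{0}$ large enough, the conditional probability collapses to $K\, n\exp(-K' K_{0}^{2}\log n)\le K_{3}/n^{2}$ almost surely; taking expectation (Fubini) yields the same unconditional bound on $\PP\{\MM_{n}^{*}(\delta_{n})>\lambda_{n}\}$, which is summable, and Borel--Cantelli delivers $\MM_{n}^{*}(\delta_{n})=O(\delta_{n}^{1/2}(\log n)^{1/2})$ almost surely. The main obstacle will be verifying that the chaining constants in the conditional inequality are truly data-independent: the reference measure is the discrete $\HH_{n}$ rather than the continuous $H$, so one must argue via the Glivenko--Cantelli/VC step above that the variance control on rectangular increments is uniform in $n$. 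Once this uniform control is in hand, the chaining proof carries over essentially verbatim, and the remainder of the argument is parallel to Lemma~\ref{prop:oscillation}.
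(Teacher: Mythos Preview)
Your approach is broadly correct, but the paper takes a considerably more elementary route that avoids chaining altogether. The key observation you miss is that $\alpha_n^*$ is a step function: since the bootstrap sample takes values only among the original observations, the supremum defining $\MM_n^*(\delta_n)$ is actually a maximum over pairs of lattice points of the form $(X_{i_1,1},\ldots,X_{i_d,d})$ with $\i\in\{1,\ldots,n\}^d$, of which there are at most $n^{2d}$. For each fixed pair, the difference is a normalized sum of $n$ bounded i.i.d.\ (conditional) variables, and Bernstein's inequality gives an exponential tail with variance parameter bounded by $K\delta_n$; the paper obtains this variance bound not via Glivenko--Cantelli but directly from Lemma~\ref{prop:oscillation}, writing $\sup_{s}[\FF_{n,j}(s+\delta_n)-\FF_{n,j}(s)]\le \delta_n+n^{-1/2}\MM_n(\delta_n)$. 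A crude union bound over the $n^{2d}$ pairs then produces a polynomial prefactor $2n^{2d}$ in place of your $K_1/\delta$, after which the argument is identical to yours: integrate over $\PP$, choose $x=K_1\delta_n^{1/2}(\log n)^{1/2}$ with $K_1$ large, and apply Borel--Cantelli.

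Your chaining route would also work, but it buys nothing here and costs you the verification that Segers's argument---written for a continuous underlying measure---goes through for the discrete conditional law $\HH_n$ with constants independent of the sample. You correctly flag this as the main obstacle but do not carry it out. The paper's approach turns the discreteness of $\HH_n$ from an obstacle into a shortcut: no chaining, no covering numbers, just a finite maximum and a union bound.
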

Note that the sequence $(\delta_n)$ may be constant.
\begin{proof}
Since $\alpha_n^*$ is a step function, we find that
$$\sup_{\ \|\s-\s'\|_\infty< \delta_n } |\alpha_n^*(\s)-\alpha_n^*(\s')|=\max  |\alpha_n^*(X_{i_1,1},\ldots,X_{i_d,d})-\alpha_n^*(X_{i'_1,1},\ldots,X_{i'_d,d})| ,$$
with the maximum taken over all  $|X_{i_j,j}-X_{i'_j,j}|< \delta_n$, $j=1,\ldots,d$, $ i_1,i'_1,\ldots,i_d,i'_d \in \{1,\ldots,n\}$.
For any  $\i:=(i_1,\ldots,i_d)$ and $\i'=(i_1',\ldots,i_d')$ in $\{1,\ldots,n\}^d$,  we rewrite
$$|\alpha_n^*(X_{i_1,1},\ldots,X_{i_d,d})-\alpha_n^*(X_{i'_1,1},\ldots,X_{i'_d,d})| = n^{-1/2}\sum_{k=1}^n \{ V_{k,\i,\i'} - \EE^*  [V_{k,\i,\i'}] \},$$
as a sum of bounded independent random variables
with
$$ V_{k,\i,\i'}:=  \1\{X_{k,j}^*\leq X_{i_j,j}, \, j=1,\ldots,d\}- \1\{X_{k,j}^*\leq X_{i'_j,j},\,   j=1,\ldots,d\},$$
conditionally on the sample $(\X_1,\ldots, \X_n)$.
Moreover, a simple calculation and Lemma~\ref{prop:oscillation} yield
\begin{eqnarray*}
\text{Var}^* (V_{k,\i,\i'}) &\le& \sum_{j=1}^d \PP^*\left\{ 
\min(X_{i_j,j}, X_{i_j',j} ) \le  X_{k,j}^*\le \max
(X_{i_j,j}, X_{i_j',j} )\right\}  \\
&\le&\sum_{j=1}^d  \sup_{s_j} [\FF_{n,j}(s_j+\delta_n) - \FF_{n,j}(s_j)] \\
&\le& d\delta_n + dn^{-1/2} \MM_n(\delta_n)\\
&\le&  d \max(\delta_n,\MM_n(\delta_n)/\sqrt{n})
\\
&\le& K \max(\delta_n,\sqrt{\delta_n \log n}/\sqrt{n})= K\delta_n,
\end{eqnarray*}
for $n$ large enough, for almost all realizations   and for some constant $K>0$.
Hence, by the union bound and Bernstein's exponential inequality for bounded random variables,
we have, for some constant $K_0$,
\begin{eqnarray*}
&& \PP^*\left\{\max_{\substack{\i,\i' \in \{1,\ldots,n\}^d \\ |X_{i_j,j}-X_{i'_j,j}|< \delta_n,\; \forall j} }
|\alpha_n^*(X_{i_1,1},\ldots,X_{i_d,d})-\alpha_n^*(X_{i'_1,1},\ldots,X_{i'_d,d})|> x \right\}\\
&&\le  2n^{2d} \exp\left( - K_0 ( \sqrt{n}x \wedge x^2 \delta_n^{-1} ) \right),
\end{eqnarray*}
for all samples $(\X_1,\ldots,\X_n)$.
By integrating the previous inequality over $\PP$, we get the same inequality, but replacing $\PP^*$ by $\PP$.
Set $x=K_1\delta_n^{1/2}(\log n)^{1/2}$ and take a constant $K_1$ sufficiently large to obtain
$$ \sum_{n=1}^{+ \infty}\PP\left\{ \MM_n^*(\delta_n) > K_1\delta_n^{1/2}(\log n)^{1/2} \right\} < +\infty.$$
Apply the Borel-Cantelli lemma to conclude the proof.
\end{proof}

\medskip

Analogous to the approximation of the process
 $\ZZ_n$ by $\widetilde \ZZ_n$ before,   we introduce a simpler process $\widetilde \ZZ_n^*$ to approximate $\ZZ_n^*$.
Set
\begin{equation}
\widetilde \ZZ_n^*(\s)=  \sqrt{n}(\HH_n^*-\HH_n)(\s) - \sum_{j=1}^d C_j(\s)
\sqrt{n}(\FF_{n,j}^*-\FF_{n,j})(s_j).
\label{ZZ_ntildeast}
\end{equation}

\begin{prop}\label{prop:Z_n^*}
Under conditions (C1) and (C2), we have
$$\lim_{n\to\infty} \EE \left[ \sup_{h\in BL_1} \left|  \EE^* [ h (\mathbb{Z}_n^*) - h( \widetilde \ZZ_n^*) ] \right| \right] =0.
$$
\end{prop}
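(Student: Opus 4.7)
The plan is to mirror the proof of Proposition \ref{prop:Z_n}, replacing each classical empirical-process tool by its bootstrap analogue. The bounded-Lipschitz reduction used there, combined with $|h|\le 1$, shows it suffices to bound
\[
\sup_{f\in \F_n} |\ZZ_n^*(f) - \widetilde \ZZ_n^*(f)| \;\le\; 2^d L_n \sup_{\s \in [0,1]^d}|\ZZ_n^*(\s)-\widetilde \ZZ_n^*(\s)|
\]
in outer probability conditional on the data, with probability tending to one in the data; dominated convergence then upgrades the almost-sure rate to the expected bounded-Lipschitz bound in the statement. Writing $\alpha_n^*(\s) = \sqrt{n}(\HH_n^*-\HH_n)(\s)$, I would expand
\[
\ZZ_n^*(\s) = \alpha_n^*(\FF_{n,1}^{*-}s_1,\ldots,\FF_{n,d}^{*-}s_d) + \sqrt n\bigl[\HH_n(\FF_{n,1}^{*-}s_1,\ldots)-\HH_n(\FF_{n,1}^{-}s_1,\ldots)\bigr],
\]
and split $|\ZZ_n^*(\s)-\widetilde \ZZ_n^*(\s)|\le I^*+II^*$ in exact parallel with the decomposition $I,II$ of Proposition \ref{prop:Z_n}.

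For $I^*$, the oscillation of $\alpha_n^*$ at the scale of the bootstrap quantile increments, I would invoke Lemma \ref{lemma:alpha_n^*} with $\delta_n = n^{-1/2}(\log\log n)^{1/2}$ after establishing a Chung-Smirnov-type bound $\max_j\sup_s |\FF_{n,j}^{*-}s - s| = O(n^{-1/2}(\log\log n)^{1/2})$ a.s., which is produced by the same Bernstein-plus-Borel-Cantelli scheme that drives Lemma \ref{lemma:alpha_n^*}. This yields $I^* = O(n^{-1/4}(\log n)^{1/2}(\log\log n)^{1/4})$ a.s., so $L_n I^* \to 0$. For $II^*$, I would replace $\HH_n$ by $C$ up to an $n^{-1/2}\alpha_n$ correction absorbed via Lemma \ref{prop:oscillation}, then apply the mean value theorem to $C$ to obtain
\[
\sqrt n\bigl[C(\FF_{n,1}^{*-}s_1,\ldots)-C(\FF_{n,1}^{-}s_1,\ldots)\bigr] = \sum_{j=1}^d C_j(\s_n^*)\sqrt n(\FF_{n,j}^{*-}s_j - \FF_{n,j}^{-}s_j),
\]
and use a bootstrap Bahadur-Kiefer representation to replace $\sqrt n(\FF_{n,j}^{*-}s_j - \FF_{n,j}^{-}s_j)$ by $-\alpha_{n,j}^*(s_j)$, where $\alpha_{n,j}^*(s) := \sqrt n(\FF_{n,j}^*-\FF_{n,j})(s)$, with an $O(n^{-1/4}(\log n)^{1/2}(\log\log n)^{1/4})$ a.s.\ remainder.

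The remaining term has exactly the form of $IIb$ in Proposition \ref{prop:Z_n}, namely $\sum_j |C_j(\s_n^*)-C_j(\s)|\,|\alpha_{n,j}^*(s_j)|$, and would be handled identically: split $s_j \in [\eps_n, 1-\eps_n]$ versus the tails with $\eps_n = n^{-p}$ for $p=p(r,\beta)$ tuned as there, use (C1), and apply a bootstrap weighted LIL $\sup_s|s^{-1/2}(1-s)^{-1/2}\alpha_{n,j}^*(s)| = O((\log n)^{1/2}\log\log n)$ a.s. This last bound is once more a conditional Bernstein estimate with dyadic partitioning of $[0,1]$, in the spirit of Mason (1981). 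The resulting $II^* = O(n^{-q})$ a.s.\ for some $q>0$ still beats $L_n = O((\log n)^\gamma)$.

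The main obstacle is supplying three bootstrap ingredients in clean form: a Chung-Smirnov LIL for $\FF_{n,j}^{*-}$, a Bahadur-Kiefer representation relating $\FF_{n,j}^{*-}$ to $\alpha_{n,j}^*$, and a weighted LIL of Mason type for $\alpha_{n,j}^*$. None of these is deep; each follows from the conditional Bernstein-plus-Borel-Cantelli template already used in Lemma \ref{lemma:alpha_n^*}, together with the fact that the variances of the bootstrap indicators concentrate uniformly around their empirical counterparts on events of probability $1-O(n^{-2})$. Once these auxiliary rates are in hand, the remaining algebra is structurally a copy of the proof of Proposition \ref{prop:Z_n}.
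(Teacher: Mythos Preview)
Your outline is correct and the strategy is essentially the paper's, but the execution differs in two places worth noting.

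First, the paper uses a finer four-term remainder decomposition $R_n^*=R_{n,1}^*+R_{n,2}^*+R_{n,3}^*+R_{n,4}^*$ that separates the oscillations of $\alpha_n^*$ and $\alpha_n$ (at the scales of $\beta_{n,j}^*$ and $\beta_{n,j}$ respectively) from a purely deterministic-copula piece, rather than your two-term split $I^*+II^*$; this allows the coarser choice $\delta_n=n^{-1/4}$ throughout and avoids tracking the exact Chung--Smirnov rate for $\FF_{n,j}^{*-}$. Second, and more importantly, the paper does not redo the three bootstrap ingredients from scratch: it cites Cs\"org\H{o}--Mason (1989) directly for the bootstrap Bahadur--Kiefer bound $\sup_s|\beta_{n,j}^*(s)+\alpha_{n,j}^*(s)|=O(n^{-1/4}(\log n)^{1/2}(\log\log n)^{1/4})$ a.s., and for the weighted control of $\alpha_{n,j}^*$ it invokes the Donsker property of the class $\{\1\{x\le t\}t^{-b}(1-t)^{-b}:t\in(0,1)\}$ for $0\le b<1/2$ together with the bootstrap CLT, obtaining only $\sup_s|\alpha_{n,j}^*(s)|/(s^b(1-s)^b)=O_{p^*}(1)$ rather than an almost-sure Mason-type rate. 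This weaker in-probability bound is all that is needed and sidesteps the most delicate of your three auxiliary claims; your plan to recover a full weighted LIL via Bernstein-plus-dyadic-partitioning would work but is noticeably harder than the Donsker shortcut.
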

\begin{proof}
First, we notice that, for any $\eta>0$,
\begin{eqnarray*}
\EE\left[ \sup_{h\in BL_1} \left|  \EE^* [ h (\mathbb{Z}_n^*) - h( \widetilde \ZZ_n^*) ] \right| \right]  &\le& \eta+ 2\EE\left[ \PP^*\left\{ \sup_{f\in \F_n} \left| \ZZ_n^*(f) - \widetilde \ZZ_n^*(f) \right| \ge \eta\right\}\right]\\
  &\le& \eta+ 2\EE\left[ \PP^*\left\{ \sup_{{\bf s}} 2^d L_n  \left| \ZZ_n^*({\bf s}) - \widetilde \ZZ_n^*({\bf s}) \right| \ge \eta\right\}\right].
\end{eqnarray*}
Some straightforward adding and subtracting yields
$\ZZ_n^*(\s) = \bar \ZZ_n^*(\s) + R_n^*(\s)
$ 
with
\[ \bar \ZZ_n^*(\s)= \sqrt{n}\{ \HH^*_n(\s)-\HH_n(\s)\} - \sqrt{n}\{ C(\FF_{n,1}^*s_1,\ldots,\FF_{n,d}^*s_d)-C(\FF_{n,1}s_1,\ldots,\FF_{n,d}s_d)\}
\]
and
$
  R_n^*(\s)  = R_{n,1}^*(\s) + R_{n,2}^*(\s)+ R_{n,3}^*(\s)+ R_{n,4}^*(\s)
$ with
\begin{eqnarray*}
R_{n,1}^*(\s) &= &
  \alpha_n^*(\FF_{n,1}^{*-}s_1,\ldots,\FF_{n,d}^{*-}s_d) - \alpha_n^*(\FF_{n,1}^{-}s_1,\ldots,\FF_{n,d}^{-}s_d)  \\
R_{n,2}^*(\s) &= &
  \alpha_n^*(\FF_{n,1}^{-}s_1,\ldots,\FF_{n,d}^{-}s_d) - \alpha_n^*(\s)\\
R_{n,3}^*(\s) &= &
 \alpha_n(\FF_{n,1}^{*-}s_1,\ldots,\FF_{n,d}^{*-}s_d) -
 \alpha_n(\FF_{n,1}^{-}s_1,\ldots,\FF_{n,d}^{-}s_d)   \\
 R_{n,4}^*(\s) &= &
 \sqrt{n} \left\{ C (\FF_{n,1}^{*-}s_1,\ldots,\FF_{n,d}^{*-}s_d) - C (\FF_{n,1}^{-}s_1,\ldots,\FF_{n,d}^{-}s_d) \right\}\\
 && + \sqrt{n}\left\{ C (\FF_{n,1}^{*}s_1,\ldots,\FF_{n,d}^{*}s_d) -  C (\FF_{n,1}s_1,\ldots,\FF_{n,d}s_d) \right\}.
\end{eqnarray*}
Let $\alpha_{n,j}^*(s) = \sqrt{n} ( \FF_{n,j}^* - \FF_{n,j})(s)$ and
 $\beta_{n,j}^*(s) = \sqrt{n} ( \FF_{n,j}^{-*} - \FF_{n,j}^{-})(s)$ be the bootstrap versions of the  empirical processes $\alpha_{n,j} (s)$ and $\beta_{n,j}(s)$, respectively.
Both converge to the same weak limit as
\[  \sup_{0\le s_j \le 1} | \beta_{n,j}^*(s_j)+\alpha_{n,j}^*(s_j)|= O (n^{-1/4} (\log n)^{1/2} (\log\log n)^{1/4})  \quad\text{ almost surely},\]
see displays (2.10') and (2.12') in Theorem 2.1 of Cs\"org\'o and Mason (1989).
It remains to show that
$\PP^* \{ L_n \sup_{\s} | R_{n}^*(\s) | > \eta \} \to0$
for all $\eta>0$, conditionally given all  sequences $(\X_1,\ldots,\X_n)\in \Omega_n$ for some sequence of events $\Omega_n\subset\RR^{d\times n}$ with $\lim_{n\to\infty} \PP(\Omega_n)=1$.\\

Let $\delta_n=n^{-1/4}$. (Other choices are possible as well.)
We have
\begin{eqnarray*}
\limsup_{n\to\infty} \PP^*\{ L_n\| R_{n,1}^*\|_\infty \ge \eta\}  \le \limsup_{n\to\infty}\PP^*\{ L_n \MM_n^*(\delta_n) \ge \eta\} +\limsup_{n\to\infty} \PP^*\{\max_j \| \beta_{n,j}^*\|_\infty \ge \sqrt{n} \delta_n\}=0,
\end{eqnarray*}
by Lemma  \ref{lemma:alpha_n^*}. Next, on the event $\max_j \| \beta_{n,j}\|_\infty \le \sqrt{n}\delta_n$ (that holds almost surely by the law of iterated logarithm),
\begin{eqnarray*}
\limsup_{n\to\infty} \PP^*\{ L_n\| R_{n,2}^*\|_\infty \ge \eta\} \le \limsup_{n\to\infty} \PP^*\{ L_n \MM_n^*(\delta_n) \ge \eta\} =0,
\end{eqnarray*}
by Lemma  \ref{lemma:alpha_n^*}.
On the event
$L_n\MM_n(\delta_n)  <\eta$ (that holds almost surely by Lemma  \ref{prop:oscillation}), we have
\begin{eqnarray*}
\limsup_{n\to\infty} \PP^*\{ L_n\| R^*_{n,3}\|_\infty \ge \eta\} \le \limsup_{n\to\infty} \PP^*\{  \max_j \| \beta_{n,j}^* \|_\infty > \sqrt{n}\delta_n\} =0
\end{eqnarray*}
by the weak convergence of $\beta_{n,j}^*$.
Finally, for some $s_j^*$ between $ \FF_{n,j}^{-*}(s_j)$ and $ \FF_{n,j}^{-}(s_j)$, and $s_j^{**}$ between $ \FF_{n,j}^{*}(s_j)$ and $ \FF_{n,j}(s_j)$, we have
\begin{eqnarray*}
 | R^*_{n,4} (\s) | &=&  \left|  \sum_{j=1}^d\{  C_j(s_j^*) \beta_{n,j}^*(s_j) + C_j(s_j^{**})  \alpha_{n,j}^*(s_j) \right| \\
 &\le&
  \sum_{j=1}^d | \beta_{n,j}^*(s_j) + \alpha_{n,j}^*(s_j) | + \sum_{j=1}^d |\alpha_{n,j}^*(s_j) | | C_j(s_j^*) - C_j(s_j^{**}) |.
\end{eqnarray*}
The first term is of order $O (n^{-1/4} (\log n)^{1/2} (\log\log n)^{1/4})$, uniformly in $s_j$. For the second term, we argue as in the proof of Proposition~\ref{prop:Z_n}.
First,
we observe that $| s_j^{**} - s_j^* | \le | s_j^*-s_j| + |s_j^{**}-s_j|  $ is of order $O_{p^*}(n^{-1/2})$. Second,
 since the class $\1\{ x\le t\} t^{-b} (1-t)^{-b}$ is a $P$-Donsker class for the uniform probability measure  $P$ on $[0,1]$, for all $0\le b<1/2$, see Van der Vaart and Wellner (1996), Example 2.11.15 (page 214),
  the weak convergence of the bootstrap empirical process [Van der Vaart and Wellner (1996, Theorem 3.6.1, page 347)] implies that
\[\sup_{0<s<1}|\alpha_{n,j}^*(s)| / (s^{b}(1-s)^{b} )= O_{p^*}(1). \]
Consequently, as in the proof of Proposition~\ref{prop:Z_n}, we find that, for some constant $K<\infty$,
\begin{eqnarray*}
 \sup_{\eps_n \le s_j\le 1-\eps_n }    |\alpha_{n,j}^*(s_j) | | C_j(s_j^*) - C_j(s_j^{**}) | &\le& K   | s_j^{**} - s_j^{*} |^r  s_j^{b-\beta} (1-s_j)^{b-\beta} \sup_{s_j}  |\alpha_{n,j}^*(s_j)| / (s^{b}(1-s)^{b} )
 \end{eqnarray*}
 which is of order $O_{p^*}(1) \cdot \max( n^{-r/2} \max(1, \eps_n^{b-\beta})$. On the other side,
 \begin{eqnarray*}
 \sup_{  s_j\not\in [\eps, 1-\eps_n] }    |\alpha_{n,j}^*(s_j) | | C_j(s_j^*) - C_j(s_j^{**}) | &\le& 2 \sup_{  s_j\not\in [\eps, 1-\eps_n] }    |\alpha_{n,j}^*(s_j) |\\&\le& 2\eps_n^b
  \sup_{s_j}  |\alpha_{n,j}^*(s_j)| / (s^{b}(1-s)^{b} ),
 \end{eqnarray*}
  which is of order $O_{p^*}(  \eps_n^{b})$. Combining both bounds yields $\sup_\s | R_{n,4}(\s)| = O_{p^*} ( \eps_n^b + n^{-r/2} \max(1, \eps_n^{b-\beta}))$.
 Taking $\eps_n=n^{-p}$ with $p$ depending on $b,\beta$ and $r$, we get that
 $\lim_{n\to\infty} \PP^*\{ L_n \sup_\s | R^*_{n,4}(\s) | \geq \eta \} = 0$  for all $\eta>0$, conditionally on all sequences  $(\X_1,\ldots,\X_n)\in \Omega_n$ for some sequence of events $\Omega_n$ with $\lim_n \PP(\Omega_n)=1$.
 This completes our proof.
\end{proof}

\subsection{\sc Proof of Theorem~\ref{Thm1}}
 By triangle inequality, we have,
\begin{eqnarray*}
\EE \left[ \sup_{h\in BL_1} \left| h (\ZZ_{n}) ] - \EE^* [ h (\ZZ_{n}^{\ast })] \right| \right]
& \le&   \sup_{h \in BL_1} \left| \EE[ h( \ZZ_n) - h(\widetilde \ZZ_n)] \right| \\ &&
+\EE \left[ \sup_{h \in BL_1} \left| \EE[ h(\widetilde \ZZ_n)] -\EE^* [ h(\widetilde \ZZ_n^*)] \right| \right]\\ &&+\EE\left[
\sup_{h \in BL_1} \left| \EE^* [ h(\widetilde \ZZ_n^*)-
 h(\ZZ_n^*)] \right|\right].
\end{eqnarray*}
In view of
Proposition \ref{prop:Z_n}
 and  Proposition \ref{prop:Z_n^*}, it remains to show that
the second term on the right is asymptotically negligible.
We recall  that
\begin{eqnarray*}
\widetilde{\ZZ}_{n}(f) &=& \sum_{k=1}^{2^d L_n}\sigma _{k}\widetilde{\ZZ}
_{n}(\s_{k})=  \sum_{k=1}^{2^d L_n}\sigma _{k}
 \int f_{k}(\x)\, d\alpha_n(\x),
\end{eqnarray*}
for
\[ f_k(\x)= \1\{ \x\le \s_{k} \} - \sum_{j=1}^d C_j(\s_k) \1\{ x_j\le s_{k,j}\} .\]
Now, let
$h_{f}(\x)= \sum_{k=1}^{2^d L_n}\sigma_{k}f_{k}(\x)$ so that
\begin{eqnarray}
\widetilde{\ZZ}_{n}(f)= \int h_f\, d\alpha_n,
\end{eqnarray}
and we can derive in the same way
\begin{eqnarray}
\widetilde{\ZZ}_{n}^*(f)= \int h_f\, d\alpha_n^*.
\end{eqnarray}
We now apply Theorem 3 in Radulovi\'c (2012), stated as Theorem
\ref{dragan2012} in the appendix for convenience. We need to
verify that
\begin{itemize}
\item
the $d+1$ classes
\begin{eqnarray*}
\G_k^{a} &=& \left\{ \1\{ \x\le \s_{k}\},\  \s_k\in [0,1]^d \right\},\\
\G_k^{(j)} &=& \left\{ C_j(\s_k) \1\{ x\le s_{k,j}\},\  \s_{k} \in [0,1]^d  \right\},\; j=1,\ldots,d,
\end{eqnarray*}
have VC--indices $V_{k}^{a}$ and $V_{k}^{(j)}$, respectively, with
$ \sum_{k=1}^{2^d L_n} ( V_{k}^{a} + \sum_{j=1}^d V_{k}^{(j)} ) \le K ( \log n)^\gamma$ for some finite constant $K$ and  some $0<\gamma<1$.
\item
 the class
$\mathcal{H}_{n}=\{h_{f}$ : $\ f\in \mathcal{F}_{n}\}$ has an envelope $H(\x)$ with $\EE[ H^4(\X)] <\infty$.
\end{itemize}

First we verify the VC property. The class $\mathcal{G}_{k}^{a}$
is   VC with VC-dimension  $V_{k}^{a}=d+1$ (Van der Vaart and
Wellner, 2000, page 135),
 while the class
$\G_k^{(j)}$ is a subclass of the class of functions $ c \1\{a\leq
x\leq b\} $ with $a,b\in \RR$ and $c>0$. This class has a VC index
$3$ : see van der Vaart and Wellner (2000), Problem 20, page 153.
Consequently
$$ \sum_{k=1}^{2^d L_n} ( V_{k}^{a} + \sum_{j=1}^d V_{k}^{(j)}) \le (4d+1) 2^d L_n \le K (\log n )^\gamma $$
for some $K<\infty$.\\

It remains to verify the envelope condition. We will show that $h_f(\textbf{x})$ has envelope $1+d+\sum_{j=1}^d TV(C_j)$.
Writing
\begin{eqnarray*}
g_{\x}(\s) &=& \1\{\x\leq \s\}-\sum_{j=1}^d C_{j}(\s)\1\{x_j\leq s_j\},
\end{eqnarray*}
we see that
\[
h_{f}(\x)=\sum_{k=1}^{L_n}c_{k}g_{\x}(B_k)
\]
for $c_k=\pm 1$ and the operation $\phi (B_k)$ defined in (\ref{Z_n:box}) for any function $\phi
:\RR^{d}\rightarrow \RR$.
Furthermore, writing
\begin{eqnarray*}
\gamma_{\x}(\s) =\1\{\x\leq \s\}, \;\; \zeta^{(j)}_{x}(\s) = C_{j}(\s)\1\{x\leq s_j\},\;j=1,\ldots,d,
\end{eqnarray*}
we have
 \begin{eqnarray*}
|h_{f}(\x)| &\leq& \sum_{k=1}^{L_n} |\gamma_{\x}(B_k)|+\sum_{j=1}^d \sum_{k=1}^{L_n}| \zeta_{x_j}^{(j)}(B_k)|\\
&\le& 1+ \sum_{j=1}^d \sum_{k=1}^{L_n}| \zeta_{x_j}^{(j)}(B_k)|
\end{eqnarray*}
since the boxes $B_1,\ldots,B_{L_n}$ are disjoint.
Since each $B_k$ is of the form $\prod_{j=1}^d (s_{k,j}^{1},s_{k,j}^{2}] $, there is a (fine enough) lattice partition $\Pi$ of $[0,1]^d$ with the property that each $B_k$ can be written as a union
of (disjoint) elements $A_{k_j}$, with $ A_{k_j}\in \Pi$. A little reflexion shows that, for each $1\le j\le d$,
\begin{eqnarray*}
 \sum_{k=1}^{L_n}| \zeta_{x_j}^{(j)}(B_k)| &\le& \sum_{A\in \Pi} | \zeta_{x_j}^{(j)}(A)|
\end{eqnarray*}
and, moreover, for $A_m= \prod_{j=1}^d (s_{m,j}^{1},s_{m,j}^{2}] \in \Pi$, $A_{m,-j}=\prod_{l\neq j} (s_{m,l}^1,s_{m,l}^2] $
and
$$C_j(\s_{-j} | t) := C_j(s_1,\ldots,s_{j-1},t,s_{j+1},\ldots,s_d),$$
 for every $\s_{-j}\in [0,1]^{d-1}$
and every $t\in [0,1]$,
 a little algebra gives the identity
\begin{eqnarray*}
\zeta_{x_j}^{(j)}(A_m) 
&=& \1\{x_j\leq s_{m,j}^{2}\} C_{j}(A_m) + \1\{s_{m,j}^{1}<x_j\leq
s_{m,j}^{2}\} C_j(A_{m,-j} | s_{m,j}^{1}).
\end{eqnarray*}
Since
\[ C_j(\s_{-j}|s_j)=\PP\{  \X_{-j} \leq \s_{-j} \, | \, X_j = s_j\},\]
we obtain
\begin{eqnarray*}
\sum_{k=1}^{L_n}| \zeta_{x}^{(j)}(B_k)| &\leq&
\sum_{A_m\in \Pi}  | \zeta_{x}^{(j)}(A_m)| \\
&\leq&
 \sum_{A_m\in \Pi}   |C_{j}(A_m)| + \sum_{A_m\in \Pi} \1\{s_{m,j}^{1}<x_j \leq
s_{m,j}^{2}\} \PP\{ \X_{-j} \in A_{m,-j} \, | \, X_j = s_{m,j}^1\}\\
&\leq&
\text{TV}(C_j)  +   \sum_{A_m\in \Pi} \1\{s_{m,j}^{1}<x_j \leq
s_{m,j}^{2}\} \PP\{ \X_{-j} \in A_{m,-j} \, | \, X_j = s_{m,j}^1\}.
\end{eqnarray*}
Let $A_{m(\x)}\in\Pi$ with $\x\in A_{m(\x)}$ and $s_j^{1}<x\le s_{j}^{2}$ with $(s_{j}^{1},s_{j}^{2}]$ be the projection of $A_{m(\x)}$ on the j-th axis of the lattice.
Then, the last term on the right of the previous display can be bounded as follows:
\begin{eqnarray*}
&& \sum_{A_m\in \Pi} \1\{s_{m,j}^{1}<x_j \leq s_{m,j}^{2}\} \PP\{ \X_{-j} \in A_{m,-j} \, | \, X_j = s_{m,j}^1\}\\
&& \le  \sum_{A_m\in \Pi,\ s_{m,j}^1=s_j^1, \ s_{m,j}^2 =s_j^2 }  \PP\{ \X_{-j} \in A_{m,-j} \, | \, X_j = s_{j}^1\}\\ &&
\le1
\end{eqnarray*}
since the boxes $A_{m}\in \Pi$ and therefore $A_{m,-j}$ are disjoint. We have shown that
the class $\mathcal H_n$ has envelope $1+d+\sum_{j=1}^d TV(C_j)$.\\
We can now apply Theorem \ref{dragan2012} to conclude that
$$\lim_{n\to\infty}  \EE \left[ \sup_{h \in BL_1} \left| \EE[ h(\widetilde \ZZ_n)] -\EE^* [ h(\widetilde \ZZ_n^*)] \right| \right]  = 0,$$ and the proof is complete.
\qed

\subsection{\sc Proof of Theorem~\ref{CoreThParam}}

We proceed as in the proof of Theorem~\ref{Thm1}. We write
$\widehat C= C_{\widehat\theta}$ and $\wh C^*= C_{\wh \theta^*}$.
Recall that
\[ \YY_n= \ZZ_n - \sqrt{n}(\wh C-C).\]
We may replace $\ZZ_n$ by $\widetilde\ZZ_n$ with impunity since
\begin{eqnarray*}
&&\sup_{h\in BL_1}\left| \EE [ h( \YY_n) - h(\widetilde\ZZ_n-\sqrt{n}(\wh C-C) )] \right| \\
&\le& \delta + 2\PP\left\{   \sup_{f\in \F_n} \left| \YY_n(f)- \widetilde \ZZ_n(f) + \sqrt{n} (\wh C-C)(f) \right| \ge\delta \right\}\\
&=&\delta +2 \PP\left\{ \sup_{f\in \F_n } \left| \ZZ_n(f)- \widetilde \ZZ_n(f) \right|\ge\delta \right\} \\
&\to&\delta \ \text{as } n\to\infty,
\end{eqnarray*}
for every $\delta>0$,
as in the proof of  Proposition~\ref{prop:Z_n}. Next, by the mean value theorem and
assumptions (C3) and (C4), we have
\begin{eqnarray*}
\sqrt{n}(\wh C-C)(\s) &=&
\sqrt{n} (\wh\theta-\theta_0)' \dot{C}_{\theta_0}(\s)+ \sqrt{n}(\wh\theta-\theta_0)'\{\dot{C}_{\tilde \theta}(\s)-\dot{C}_{\theta_0}(\s)\}
\\
&& \text{ for some $ \tilde \theta$ between $\wh\theta$ and $\theta_0$}
\\
&=&
 \left( \int \psi\, d\alpha_n   +n^{1/2} \eps_n \right)' \dot{C}_{\theta_0}(\s) + \sqrt{n} (\wh\theta-\theta_0)'\{ \dot{C}_{\tilde \theta}(\s)-\dot{C}_{\theta_0}(\s)\}
\\
&=&\left(\int \psi\, d\alpha_n\right)' \dot{C}_{\theta_0} (\s)  + R_n(\s)
\end{eqnarray*}
for some remainder term $R_n$ that satisfies
\begin{eqnarray*}
| R_n(\s)  | &\le& n^{1/2}\|\eps_n\|_2 \| \dot{C}_{\theta_0}(\s)\|  _2 +  K n^{1/2}\|\wh\theta -\theta_0\|_2^{1+\nu}\\
&=& O_p( n^{1/2}\|\eps_n \|_2+ n^{-\nu/2} )\\
&=& o_p(1/L_n).
\end{eqnarray*}
This bound holds uniformly in $\s$.
Consequently,  for $$\widetilde \YY_n(f)=\sum_{k=1}^{2^d L} \sigma_k \widetilde \YY_n(\s_k)$$ based on
\begin{eqnarray*}
\widetilde \YY_n(\s) &=& \widetilde\ZZ_n(\s) -  \left(\int \psi
\, d \alpha_n\right)' \dot{C}_{\theta_0}(\s),
\end{eqnarray*}we have
\begin{eqnarray*}
\sup_{h \in BL_1} \left| \EE [ h( \widetilde \ZZ_n -\sqrt{n}(\wh C-C) )] - \EE [ h( \widetilde \YY_n)] \right|
&=& \sup_{h \in BL_1} \left| \EE [ h( \widetilde \YY_n- R_n) ] - \EE[ h  ( \widetilde \YY_n)] \right| .
\end{eqnarray*}
Since
$$\sup_f | R_n(f) |\le 2^dL_n \sup_{\s} |R_n(\s)| \to 0
$$ in probability, we get $\sup_h | \EE[ h( \widetilde \ZZ_n -\sqrt{n}(\wh C-C))]- \EE[ h(  \widetilde \YY_n)] | \to0$, as $n\to\infty$. We conclude that
\begin{eqnarray*}
\limsup_{n\to\infty}
\sup_{h\in BL_1}\left| \EE[ h(\YY_n) ]- \EE[ h( \widetilde \YY_n)] \right| =0.
\end{eqnarray*}

For the bootstrap counterpart, we can argue in the same way. Using the expansion
\begin{eqnarray*}
\sqrt{n}(\wh C^*-\wh C)(\s) &=& \left( \int \psi\, d\alpha^*_n \right)' \dot{C}_{\theta_0} (\s)  + R_n^*(\s)
\end{eqnarray*}
for some remainder term $R_n^*$ that satisfies
\begin{eqnarray*}
\sup_{\s} | R_n^*  (\s) | &\le& K_0n^{1/2}\|\eps_n^*\| _2  + K_1
n^{1/2}\|\wh\theta -\theta_0\|_2^{1+\nu}+ K_2
n^{1/2}\|\wh\theta^*-\wh\theta\|_2^{1+\nu},
\end{eqnarray*}
for some finite constants $K_0, K_1$ and $K_2$. We check that the
processes $\YY_n^*$ and $\widetilde \YY_n^*$ are close with $\widetilde \YY_n^*$  based on
\begin{eqnarray*}
\widetilde \YY_n^*(\s)&=& \widetilde\ZZ_n^*(\s)- \left( \int \psi \, d \alpha_n^*\right)'  \dot{ C}_{\theta_0}(\s).
\end{eqnarray*}

Note that $\widetilde
\YY_n(f)= \sum_{k} \sigma_k \widetilde \YY_n(\s_k) =\int \left( \sum_{k} \sigma_k  g_k \right) \, d\alpha_n $  with
\begin{eqnarray*}
 g_k(\x) &=& \1\{ \x\le \s_k\} - \sum_{j=1}^d C_j(\s_k) \1\{ x\le s_{k,j}\} - \left(  \psi (\x) \right)'  \dot{C}_{\theta_0}(\s_k).
\end{eqnarray*}
As in the proof of Theorem \ref{Thm1}, it remains to verify the two conditions of Theorem \ref{dragan2012}.
Since the only difference with the proof of Theorem \ref{Thm1} is the addition of the term  $\left( \psi (\x)\right) ' \dot{C}_{\theta_0}(\s_k) $, we concentrate on the class
 of functions $ \left( \psi (\x) \right)' \dot{C}_{\theta_0}(\s_k)$. Since it is a
 subclass of $c' \psi(\x)$ with $c\in\RR^p$, its VC dimension trivially is equal to $p$.
 Moreover, it is not hard to see from the proof of Theorem \ref{Thm1} that
 \begin{eqnarray*}
\left|\sum_{k=1}^{2^d L_n}\sigma _{k}g_{k}(\x)\right| &\leq& 1+d+\sum_{j=1}^d  TV(C_j) + \|\psi(\x) \|TV(\dot{C}_{\theta_0}).
\end{eqnarray*}

Since $\EE [\|\psi(\X)\|_2^4]<\infty$, the conditions of Theorem \ref{dragan2012} are met, and we conclude that
$$ \EE \left[ \sup_{h\in BL_1} \left| \EE[ h( \widetilde\YY_n) ] -\EE^*[ h( \widetilde \YY_n^*)]\right|\right] \to0$$  as $n\to\infty$.
\qed

\subsection{\sc Proof of Proposition~\ref{ThC3C3bis}}

From the proofs of Proposition \ref{prop:Z_n} and Proposition \ref{prop:Z_n^*}, we   see that
$$ \sup_{\u\in [0,1]^d} | \ZZ_n(\u) -\widetilde \ZZ_n (\u)  | = O_p\left(n^{-\mu}\right)\;\;\text{and}$$
$$ \sup_{\u\in [0,1]^d} | \ZZ_n^*(\u) -\widetilde \ZZ_n^* (\u)  | = O_{p^*}\left(n^{-\mu}
\right),$$ almost surely, for some $\mu >0$. The result follows
 after  integration by parts. \qed

\subsection{\sc Proof of Corollary~\ref{CorNewBoots}}
By the delta-method,
$\{\widetilde \YY_n(\s), \s \in [0,1]^d\}$ converges  towards a Gaussian process in $\ell^{\infty}([0,1]^d)$.
The proof of Theorem~\ref{CoreThParam} shows that
$\limsup_{n\to\infty}
\sup_{h\in BL_1}| \EE[ h(\YY_n)- h( \widetilde \YY_n)] |=0$.
Hence,  the process $\YY_n$ converges weakly to the same weak limit as $\widetilde \YY_n$.
This proves the first claim.
The second part of the Corollary is a straightforward consequence of Theorem~\ref{CoreThParam} and the triangle inequality. \qed

\appendix

\section{}
\setcounter{equation}{0}

Let $X_1,\ldots,X_n$ be independent random variables with probability measure $P$. Let $\PP_n$ be the empirical probability measure, putting mass $1/n$ at each observation, and let $\PP_n^*$ be the nonparametric bootstrap measure based on $n$ independent observations from $\PP_n$. We index the empirical process $\sqrt{n}(\PP_n-P)$ and its bootstrap counterpart $\sqrt{n}(\PP_n^*-\PP_n) $  by functions $f$ that belong to a sequence of classes $\F_n$.\\

\begin{thm}
\label{dragan2012}
Let $d_n$ be an integer sequence and, for each $1\le i\le d_n$,  let
$\mathcal{G}_{i,n}$  be
a VC class of functions with VC index $V_{i,n}$ and
$$\sum_{i=1}^{d_{n}}V_{i,n}\le K (\log n)^{\gamma },$$
for some $K<\infty$ and $0<\gamma <1$.
Set
$$\mathcal{F}_{n}=\left
\{f=\sum_{i=1}^{d_{n}}g_{i}:\ g_i\in \mathcal{G}_{i,n}\right\},$$
and suppose that
 there exists an envelope function $F\geq \sup_{f\in \F_n} |f|$, independent of $n$, with $\EE[F^4(X)]<\infty$.
Then,
\[
\limsup_{n\to\infty} \EE \left[ \sup_{h\in BL_1} \left| \EE[h (
\sqrt{n}(\PP_n-P)) ] - \EE^* [ h(
 \sqrt{n}(\PP_n^*-\PP_n))] \right| \right] =0
 .\]
\end{thm}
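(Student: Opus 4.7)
The plan is to introduce a triangular array of Gaussian proxies $G_n$, each a tight centered Gaussian process indexed by $\mathcal{F}_n$ with covariance $(f,g) \mapsto \mathrm{Cov}_P(f,g)$, and then apply the triangle inequality for the bounded Lipschitz semi-metric on $\ell^\infty(\mathcal{F}_n)$. It will then suffice to show that, in expectation, this semi-metric between $\sqrt{n}(\PP_n - P)$ and $G_n$ tends to zero, and, conditionally on the data, between $\sqrt{n}(\PP_n^* - \PP_n)$ and $G_n$ as well. Crucially, $G_n$ itself need not converge weakly; only the two comparisons need to be $o(1)$.

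The first step is a uniform entropy estimate. Since $\mathcal{F}_n = \sum_{i=1}^{d_n} \mathcal{G}_{i,n}$ and each summand $\mathcal{G}_{i,n}$ is VC with index $V_{i,n}$, covering each $\mathcal{G}_{i,n}$ at scale $\varepsilon/d_n$ and summing yields, uniformly over finitely-supported probability measures $Q$,
\[ \log N\bigl(\varepsilon\|F\|_{L_2(Q)},\, \mathcal{F}_n,\, L_2(Q)\bigr) \leq c_1 \sum_{i=1}^{d_n} V_{i,n} \log(d_n/\varepsilon) \leq c_2 (\log n)^\gamma \log(d_n/\varepsilon). \]
The second step is to combine this entropy bound with symmetrization and a Bernstein/Talagrand-style chaining inequality, using the $L^4$ envelope to obtain sharp variance and tail estimates at each chaining level. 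The sub-logarithmic exponent $\gamma<1$ ensures that the accumulated chaining remainder is $o(1)$, yielding quantitative equicontinuity bounds for $\sqrt{n}(\PP_n - P)$ on $\mathcal{F}_n$, and, applied conditionally on the data, for $\sqrt{n}(\PP_n^* - \PP_n)$ as well. The third step matches the finite-dimensional distributions: the Lindeberg CLT handles the empirical process, while the Pr\ae stgaard--Wellner / Gin\'e--Zinn multiplier (or exchangeable bootstrap) CLT handles its bootstrap counterpart. Uniform convergence of the empirical covariance to the population one on $\mathcal{F}_n \times \mathcal{F}_n$, required to identify the Gaussian proxy, follows from a Glivenko--Cantelli argument for VC classes with the $L^4$ envelope.

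The main obstacle is that $\mathcal{F}_n$ depends on $n$, so the classical Gin\'e--Zinn bootstrap CLT is not available off the shelf: the classes are \emph{not} Donsker, and the proxy $G_n$ does not converge as $n \to \infty$. The real work lies in pushing the chaining bounds through in a quantitative form whose constants depend only on the aggregate VC complexity $\sum V_{i,n}$ (and on $\|F\|_{L_4(P)}$), and then verifying that the sub-logarithmic growth $(\log n)^\gamma$ with $\gamma<1$ is exactly the threshold at which both the data-side and bootstrap-side chaining remainders vanish. The $L^4$ envelope, strictly stronger than the $L^2$ condition typically assumed for Donsker classes, is what enables Bernstein-type tail control at every chaining level and, in particular, controls the rare large-deviation events produced by bootstrap oversampling of individual observations.
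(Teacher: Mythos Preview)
The paper does not prove this statement; its entire proof reads ``See Theorem 3 in Radulovi\'c (2012).'' So there is no in-paper argument against which to compare your proposal directly.

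Your outline identifies the right structural ingredients: entropy control via the VC assumption on the summands, chaining with Bernstein-type increments leveraging the $L^4$ envelope, finite-dimensional matching, and the key observation that one need not prove weak convergence of either process but only compare each to a common (non-convergent) Gaussian proxy $G_n$. This architecture is sensible and is in the spirit of how such ``bootstrap works although the CLT fails'' results are established.

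That said, what you have written is a roadmap, not a proof: every step you list is exactly where the technical work lies, and none of it is executed. Two places deserve particular scrutiny. First, the crucial claim that the chaining remainder is $o(1)$ precisely when $\gamma<1$ is asserted but not derived; this is the heart of the theorem, and it requires an explicit balancing of the entropy (of order $(\log n)^{\gamma}\log(d_n/\varepsilon)$ after your first step, with $d_n$ itself possibly of order $(\log n)^\gamma$) against the Bernstein tail exponent. Without carrying out that computation you have not shown why $\gamma<1$ is the threshold rather than, say, $\gamma<1/2$. Second, the role of the $L^4$ envelope is stated only heuristically. On the bootstrap side the fourth moment is needed to control the multinomial resampling weights (equivalently, to bound conditional variances uniformly), and you should make explicit at which chaining level and in which inequality the fourth moment, rather than the second, is actually consumed. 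As written, the proposal would be accepted as a credible plan but not as a proof.
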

\begin{proof}
See Theorem 3 in Radulovi\'c (2012).
\end{proof}
\bigskip

\section{}
\setcounter{equation}{0}

Set $\MM_n(\delta)$ as in (\ref{M_n}) for $\delta\ge 0$, and define
\[ \psi(x)= 2x^{-2} \{ (1+x) \log (1+x) -x \}, \qquad x\in (-1,0)\cup(0,\infty)\]
and $\psi(-1)=2$ and $\psi(0)=1$. This function is continuous and decreasing.\\

\begin{prop}\label{segers}
There exist constants $K_1$ and $K_2$ such that
\begin{eqnarray}
\PP\left\{ \MM_n(a) \ge \lambda \right\} \le \frac{K_1}{a} \exp\left\{ - \frac{ K_2 \lambda^2}{a} \psi\left( \frac{\lambda}{\sqrt{n} a } \right) \right\}
\end{eqnarray}
for all $ a\in (0,1/2]$ and all $\lambda\in [0,\infty)$.
\end{prop}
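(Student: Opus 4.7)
\textbf{Proof plan for Proposition \ref{segers}.}

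The plan is to establish this Bennett-type maximal inequality in three steps: a single-pair concentration bound via Bennett's inequality, a lattice discretization of the supremum, and a monotonicity argument to bridge the discretized and continuous suprema.

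First, I would fix $\s,\s'\in[0,1]^d$ with $\|\s-\s'\|_\infty\le a$ and set $Y_i=\1\{\X_i\le\s\}-\1\{\X_i\le\s'\}$. The symmetric difference $\{\X\le\s\}\triangle\{\X\le\s'\}$ is contained in a union of $d$ axis-aligned slabs of width at most $a$, so under uniform marginals $\mathrm{Var}(Y_i)\le\mathbb{E}|Y_i|\le da$, while clearly $|Y_i|\le 1$. Applying Bennett's inequality to $\alpha_n(\s)-\alpha_n(\s')=n^{-1/2}\sum_{i=1}^n(Y_i-\mathbb{E}Y_i)$ yields
\[
\PP\bigl(|\alpha_n(\s)-\alpha_n(\s')|\ge\eta\bigr)\le 2\exp\!\left(-\frac{\eta^{2}}{2 da}\,\psi\!\left(\frac{\eta}{\sqrt n\, d a}\right)\right),
\]
which already has the shape of the target bound, modulo constants absorbed into $K_2$.

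Next, I would discretize $[0,1]^d$ by the grid $\mathcal{G}_a=\{ka:k=0,1,\ldots,\lceil1/a\rceil\}^d$ of cardinality $O(a^{-d})$. Coordinatewise monotonicity of both $\HH_n$ and the identity lets me sandwich $\alpha_n(\s)$, for $\s$ inside a cell, between $\alpha_n$ evaluated at the two corners of that cell, modulo a deterministic shift of order $\sqrt n\, a^d$. Hence $\MM_n(a)$ is controlled by a constant multiple of $\max\{|\alpha_n(\s)-\alpha_n(\s')|:\s,\s'\in\mathcal{G}_a,\ \|\s-\s'\|_\infty\le 3a\}$, up to a deterministic term of order $\sqrt n\, a$ that is negligible precisely in the regime where the final right-hand side is not already trivial. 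A union bound over the $O(a^{-d})$ admissible grid pairs then produces
\[
\PP\{\MM_n(a)\ge\lambda\}\;\le\;K_3\, a^{-d}\exp\!\left(-\frac{K_2\lambda^{2}}{a}\,\psi\!\left(\frac{\lambda}{\sqrt n\, a}\right)\right).
\]
For fixed dimension $d$, I can rewrite $a^{-d}=a^{-1}\cdot a^{-(d-1)}$ and absorb the second factor into the exponential by a slight reduction of $K_2$; this is legitimate because $(d-1)\log(1/a)$ is dominated by $(K_2\lambda^{2}/a)\,\psi(\lambda/(\sqrt n a))$ in the nontrivial regime, and outside that regime the inequality holds automatically since the right-hand side already exceeds $K_1/a$.

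The main obstacle is the lattice-to-continuum passage: the deterministic shift $\sqrt n\, a^d$ produced by monotonicity must be absorbed without altering the form of $\psi$, and the constants in the exponential have to be tracked carefully through a union bound over all neighboring grid pairs. A cleaner alternative, which is the route followed in Segers (2012) and which sidesteps most of this bookkeeping, is to apply Talagrand's concentration inequality for the supremum of the empirical process over the increment class $\{\1\{\cdot\le\s\}-\1\{\cdot\le\s'\}:\|\s-\s'\|_\infty\le a\}$, whose envelope equals $1$ and whose variance is $\le da$ by the computation above; this delivers a Bennett-type bound with a polynomial prefactor in $1/a$ directly, and the final form of the statement follows.
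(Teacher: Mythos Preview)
The paper itself does not prove this proposition; it merely cites Proposition~A.1 of Segers (2012). So there is no in-paper argument to compare your plan against.

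Your three-step plan has a genuine gap in the discretization step. The deterministic shift produced when you sandwich $\alpha_n(\s)$ between its values at the corners of a mesh-$a$ grid cell is of order $\sqrt{n}\,da$, not $\sqrt{n}\,a^d$: by the Lipschitz property of any copula, $|H(\s^+)-H(\s^-)|\le d\|\s^+-\s^-\|_\infty\le da$, and there is no reason for this increment to be as small as the cell \emph{volume} $a^d$. (You write $\sqrt{n}\,a^d$ at first and $\sqrt{n}\,a$ a few lines later; only the latter is correct.) This shift cannot be absorbed across the full parameter range the proposition covers. Take $d=2$, $a=1/4$, $\lambda=1$ and $n$ large: then $\psi(\lambda/(\sqrt{n}a))\to1$, the target right-hand side is approximately $4K_1e^{-4K_2}$, which is strictly below $1$ for reasonable constants, so the bound is nontrivial; yet your discretization gives only $\MM_n(1/4)\le\MM_n^{\mathrm{grid}}(3/4)+O(\sqrt{n})$, which says nothing about $\PP\{\MM_n(1/4)\ge1\}$. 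The classical proofs of such oscillation inequalities (Stute, Einmahl) avoid this by never reducing to grid corners at scale $a$: in one dimension one partitions $[0,1]$ into $O(1/a)$ blocks of length $\approx a$ and bounds the full supremum of the local empirical process on each block by a binomial-tail exponential inequality, so the $1/a$ prefactor arises from a union bound over blocks and no deterministic $\sqrt{n}a$ term ever appears. Your Talagrand fallback is more promising, but it does not directly deliver the $1/a$ prefactor either: Talagrand controls deviations of $\MM_n(a)$ from its expectation, so you would still need a separate bound on $\EE[\MM_n(a)]$ via entropy or chaining, and matching the precise Bennett form with $\psi$ requires further work.
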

\begin{proof}
See Proposition A.1 of  Segers (2012). 
\end{proof}

\section{}
\label{computation}

We present a stochastic
optimization algorithm  that approximates  $\widetilde{\TT}_n$. 
 The algorithm  is  based on Pure Random Search and easily   implementable.\\

\begin{enumerate}
\item[{\sc Step} 1.]
Compute and store, for all $i_{j}\in \{0,...,\lfloor n^{1/d} \rfloor\},$
$$F(i_{1},...,i_{d}):= \ZZ_{n}\left(\frac{i_{1}}{ n^{1/d} }%
,...,\frac{i_{d}}{n^{1/d}}\right)$$

\item[{\sc Step} 2.]
\begin{enumerate}
\item
Compute and store,  for all $B_{i}=\Pi _{j=1}^{d}( {a_{i,j}}{n^{-1/d}}, {b_{i,j}}{%
n^{-1/d}}],$  with $%
a_{i,j},b_{i,j}\in \{0,...,\lfloor n^{1/d}\rfloor \}$ and $a_{i,j}<b_{i,j}$,
$$G(B_{i}):=\Delta _{a_{i,1},b_{i,1}}^{1}\Delta
_{a_{i,2},b_{i,2}}^{2}...\Delta _{a_{i,d},b_{i,d}}^{d}F.$$
\item
Rank the  
 $B_{i} $ according to  $G(B_{1})\geq ...\geq G(B_{m})$.
We suggest    $m=n$ as the default value.\\

\end{enumerate}
\item[{\sc Step} 3.] 
\begin{enumerate}
\item
Sample without replacement $(A_{1},\ldots, A_{L_n})\in\B=
\{B_{1},...,B_{m}\}$. 
\item
Compute, for $\mathbf{A}=(A_{1},...,A_{L_n})$ of part 3(a),
\begin{eqnarray*}
\T(\mathbf{A}) 
&=&G(A_{1})+G(A_{2})\1_{\{ A_{1}\cap
A_{2}=\emptyset\} }+\ldots+G(A_{L_n})\1_{\{ A_{1}\cap \ldots \cap A_{L_n}=\emptyset \}}.
\end{eqnarray*}

\item 
Repeat  parts (3a) and (3b)  $K $ times. We suggest $K=10^4$ as the default value.    \\    
\end{enumerate}
\item[{\sc Step} 4.]
Find $\T(\mathbf{A}^{o}) = \max_{\mathbf{A}}  \T(\mathbf{A})$ with the maximum taken over
the obtained list $\mathbf{A}_1,\ldots,\mathbf{A}_K$ in step (3),
and use this to approximate $\widetilde \TT_n $.

\end{enumerate}
 \medskip

{\sc Remark.}
(Computational cost): Step 1 requires $n$ computations. We would like to caution that Step 1, although negligible if coded in C++ or
Fortran, tends to be very slow if performed using more elaborate programming
languages like R or Mathematica.
 Step 2 requires less than $n^2$ 
summations. 
Step 3, the verification whether
 $L_n$ rectangles overlap,  requires at most $L_n^{2}/2$
 verifications, each in turn  requiring   $2d$ operations.
Thus,  we need at most  $d K L_n^{2}  $  operations  in Step 3. 
 
For a typical (larger) case $n=800$, $\ d=2,$ and
$L_n=4$, the number of computations needed for Step 2 and Step 3 is bounded by $
10^{6}.$ 
Since an ATV test typically requires $10^{3}$ bootstrap samples,
the total number of summations needed is of the order $10^{9}$. A typical
desktop computer (using C++ or Fortran code) needed  less than 5 seconds.

\mds

{\sc Remark.} 
(Improvements):
We took $m=n$ and $K=10^4$. Smaller values for $m$ and $K$ would speed up the computation, while larger values would offer more guarantees that we find the true optimum. We experimented with $m=10n$,  $m=100n$, $K=10^5$  and $K=10^6$, but we did not observe any significant improvements.\\
 
\mds

It is possible to enhance the proposed
algorithm by including an additional step, which would concentrate on local search.
Implementation of more sophisticated algorithms such as the  Accelerated Random Search
algorithm (Appel et al., 2004) would allow
us to quickly search the the neighborhod of $\mathbf{A}^{o}$.
We experimented with this
approach, and although it produced slightly larger values for statistic $%
\widetilde{{\mathbb{T}}}_{n}$, the overall performance did not significantly
change.  
We suspect that such an additional step would be  more valuable in  dimensions   $d>2$.
For a good review of optimization schemes relevant to this
scenario we refer to the paper by Hvattuma and Gloverb (2009), where the
authors describe eight optimization schemes and contrasts their performance
on numerous test functions in higher dimensions (up to dimension 64).


\end{document}